\newtheorem{theorem}{Theorem}[section] 
\newtheorem{lem}[theorem]{Lemma}
\newtheorem{prop}[theorem]{Proposition}
\newtheorem{cor}[theorem]{Corollary}
\newtheorem*{theorem*}{Theorem}
\newtheorem*{lem*}{Lemma}
\newtheorem*{prop*}{Proposition}
\newtheorem*{cor*}{Corollary}
\theoremstyle{definition}
\newtheorem*{notat*}{Notations}
\newtheorem*{quest*}{Question}
\newtheorem{rem}[theorem]{Remark}
\newtheorem{rems}[theorem]{Remarks}
\newtheorem*{rem*}{Remark}
\newtheorem*{rems*}{Remarks}
\theoremstyle{definition}
\newtheorem{ex}[theorem]{Example}
\newtheorem*{ex*}{Example}
\newtheorem{defin}[theorem]{Definition}
\newtheorem*{defin*}{Definition}
\numberwithin{equation}{section}
\def \H{{\mathbb{H}}}
\def \T{{\mathbb{T}}}
\def \C{{\mathbb{C}}}
\def \D{{\mathbb{D}}}
\def \R{{\mathbb{R}}}
	\def \cG{{\mathcal{G}}}	\def \cM{{\mathcal{M}}}		
\def \cB{{\mathcal{B}}}	\def \cH{{\mathcal{H}}}			
			\def \cU{{\mathcal{U}}}
\def \cD{{\mathcal{D}}}			\def \cV{{\mathcal{V}}}
			\def \cW{{\mathcal{W}}}
\def \cF{{\mathcal{F}}}			
\def \xx{\times}
\def \bd{\partial}
\DeclareMathOperator{\supp}{supp}
\DeclareMathOperator{\dist}{dist}
\renewcommand{\Im}{\operatorname{Im}}
\newcommand{\clos}[1]{\bar{#1}}
\let\comp\circ 
\newcommand{\0}[1]{\overline{#1}}
\newcommand{\2}[1]{{}_{|#1}}
\newcommand{\wvv}[1]{\widehat{#1}}
\newcommand{\inline}[1]{\quad\text{#1}\quad}
\newcommand{\afterline}[1]{\qquad\text{#1}\ }
\def \var{\,\cdot\,}
\def \such{\; : \;}
\NewDocumentCommand{\parder}{mo}{%
	\IfNoValueTF{#2}
	{\frac{\partial}{\partial{#1}}}
	{\frac{\partial^{#2}}{\partial{#1}^{#2}}}
}
\NewDocumentCommand{\fparder}{mom}{%
	\IfNoValueTF{#2}
	{\frac{\partial{#3}}{\partial{#1}}}
	{\frac{\partial^{#2}{#3}}{\partial{#1}^{#2}}}
}
\let\oldin\in 
\DeclareRobustCommand{\in}{\oldin\nolinebreak[4]}
\let\indentedrule\hrulefill 
\renewcommand{\hrulefill}{\noindent\indentedrule}
\DeclareMathOperator{\const}{const}
\begin{document}

\title{Free boundary problems via Sakai's theorem}

\author{D.~Vardakis}
\address{Department of Mathematics, Michigan State University, East Lansing, MI. 48823}
\email{jimvardakis@gmail.com}

\author{A.~Volberg}
\address{Department of Mathematics, Michigan State University, East Lansing, MI. 48823;\hfill\break\indent
		Hausdorff Center for Mathematics, Bonn, Germany}
\email{volberg@msu.edu}
\dedicatory{To Nikolai Nikolski, who taught me what is important in mathematics and in life\break
	 --- Sasha Volberg}
\thanks{The second author was partially supported by NSF grant DMS 1900286 and Alexander von Humboldt Foundation}



\keywords{free boundary problems, Schwarz function, real analytic curves, pseudo-continuation, positive harmonic functions, boundary Harnack principle, Nevanlinna domains}

\begin{abstract}
	A Schwarz function on an open domain $\Omega$ is a holomorphic function satisfying $S(\zeta)=\overline{\zeta}$ on $\Gamma$, which is part of the boundary of $\Omega$. Sakai in 1991 gave a complete characterization of the boundary of a domain admitting a Schwarz function. In fact, if $\Omega$ is simply connected and $\Gamma=\partial \Omega\cap D(\zeta,r)$, then $\Gamma$ has to be regular real analytic. 
	This paper is an attempt to describe $\Gamma$ when the boundary condition is slightly relaxed. 
	In particular, three different scenarios over a simply connected domain $\Omega$ are treated: when $f_1(\zeta)=\overline{\zeta}f_2(\zeta)$ on $\Gamma$ with $f_1,f_2$ holomorphic and continuous up to the boundary, when $\mathcal{U}/\mathcal{V}$ equals certain real analytic function on $\Gamma$ with $\mathcal{U},\mathcal{V}$ positive and harmonic on $\Omega$ and vanishing on $\Gamma$, and when $S(\zeta)=\Phi(\zeta,\overline{\zeta})$ on $\Gamma$ with $\Phi$ a holomorphic function of two variables. It turns out that the boundary piece $\Gamma$ can be, respectively, anything from $C^\infty$
	to merely $C^1$, regular except finitely many points, or regular except for a measure zero set.
\end{abstract}

\maketitle

\section{Introduction} \label{sec_intro} 
Let $D(\zeta_0, r)\subset{\C}$ denote the open disk centred at $\zeta_0\in{\C}$ and of radius $r>0$. Let $\Omega$ be an open subset of $D(\zeta_0,r)$ where $\zeta_0\in \Gamma=\bd \Omega\cap D(\zeta_0,r)$ is a non-isolated boundary point.

 A Schwarz function of $\Omega\cup \Gamma$ is a function $S:\Omega\cup\Gamma\to{\C}$ holomorphic on $\Omega$ and continuous on $\Omega\cup \Gamma$ that satisfies
\begin{equation} \label{eqSchwarz}
	S(\zeta)=\0{\zeta}\afterline{on}\Gamma.
\end{equation}

In his Acta Mathematica paper \cite{Sak1991}, Sakai proved that Schwarz functions completely characterize the shape of $\Gamma$. One of the technical tools used was the Phragm{\'e}n--Lindel{\"o}f principle in the form below, but it is far from being the key to his proof; his paper is full of very subtle tricks.

\begin{theorem} \label{Fuchs}
	Let $\Omega$ be an open set in ${\C}$ and let $\zeta_0$ be a non-isolated boundary point of $\Omega$. Let $f$ be a holomorphic function on $\Omega$ and $D(\zeta_0,\delta)$ a ball satisfying the following:
	\begin{enumerate}[{\rm(i)}]
		\item $\limsup|f(z)|\leq 1$ while $\Omega\ni z\to \zeta$ for every $\zeta\in\bd \Omega\cap D(\zeta_0,\delta)\setminus\{\zeta_0\}$ and
		\item $|f(z)|\leq \alpha|z-\zeta_0|^{-\beta}$ in $\Omega\cap D(\zeta_0,\delta)$ for some positive constants $\alpha$ and $\beta$.
	\end{enumerate}
	Then,
	\[
	\limsup|f(z)|\leq 1
	\]
	while $\Omega\ni z\to \zeta_0$.
\end{theorem}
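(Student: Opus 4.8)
The plan is a Phragm\'en--Lindel\"of argument: normalize, absorb the polynomial blow-up permitted by (ii) into a harmonic term, and then compare with harmonic measure. After a translation I may assume $\zeta_0 = 0$, after a dilation $\delta = 1$, and (enlarging $\alpha$) $\alpha \ge 1$; set $G := \Omega \cap D(0,1)$.

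The first move is to introduce the auxiliary function $v := \log|f| + \beta\log|\cdot|$. Since $0\notin\Omega$, the term $\beta\log|\cdot|$ is harmonic on $\Omega$, so $v$ is subharmonic there; condition (ii) forces $v\le\log\alpha$ on all of $G$; and condition (i) gives $\limsup_{G\ni z\to\zeta}v(z)\le\beta\log|\zeta|$ for every $\zeta\in\partial\Omega\cap D(0,1)\setminus\{0\}$. The lone uncontrolled boundary point $0$ is polar, hence invisible to the generalized maximum principle for the (bounded above) subharmonic $v$. Comparing $v$ with the harmonic function on $G$ carrying the boundary data $\beta\log|\cdot|$ on $\partial\Omega\cap D(0,1)$ and $\log\alpha$ on $\partial D(0,1)$, and then subtracting the harmonic term $\beta\log|\cdot|$ back off, I expect to arrive at an inequality of the form
\[
\log|f(z)|\;\le\;(\log\alpha)\,\omega(z), \qquad z\in G,
\]
with $\omega$ the harmonic measure of $\partial D(0,1)$ relative to $G$. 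Subtracting $\beta\log|\cdot|$ is designed precisely so that its singularity at $0$ annihilates the $\beta\log|\zeta|$ term in the boundary data; making that cancellation rigorous reduces to the statement that $\beta\log|\cdot|$ is its own Perron solution on $G$, which I would expect to follow from the regularity of $\zeta_0$ (a point of potential theory, not the heart of the matter). Granting the displayed bound, $\limsup_{z\to 0}\log|f(z)|\le(\log\alpha)\limsup_{z\to 0}\omega(z)$, and the theorem follows as soon as $\omega(z)\to 0$ when $z\to 0$, i.e. as soon as $\zeta_0$ is a regular boundary point of $G$. This is automatic whenever $\C\setminus\Omega$ is non-thin at $\zeta_0$ --- for instance when it contains a nondegenerate continuum ending at $\zeta_0$, or sets of diameter comparable to their distance from $\zeta_0$ at all small scales --- in which case $\omega$ in fact decays at a power rate, obtained by iterating a single-scale harmonic-measure estimate across the $\asymp\log(1/|z|)$ dyadic annuli between $z$ and $\zeta_0$, far more than is needed.

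The step I expect to be the real obstacle is the case in which $\C\setminus\Omega$ is thin at $\zeta_0$ and $\omega$ need not vanish. Here I would change tactics: by (i), $f$ is bounded near each boundary point at which $\C\setminus\Omega$ is polar, so $f$ extends holomorphically across the polar part of $\partial\Omega$ near $\zeta_0$; on the enlarged domain (ii) still holds, and (i) upgrades to an honest $|f(p_n)|\le 1$ at each of the (at most countably many) points $p_n\to\zeta_0$ so absorbed. If no non-polar boundary of $\Omega$ survives near $\zeta_0$, then $f$ now lives on a punctured disk and $|f(z)|\le\alpha|z|^{-\beta}$ permits at most a pole of order $\le\beta$ at $\zeta_0$; a genuine pole is incompatible with $|f(p_n)|\le 1$ as $p_n\to\zeta_0$, so $f$ continues holomorphically through $\zeta_0$ and $|f(\zeta_0)|=\lim_n|f(p_n)|\le 1$. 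The truly delicate case is the intermediate one, where $\partial\Omega$ accumulates at $\zeta_0$ along a set that is thin but not polar --- a rapidly shrinking sequence of small disks, say --- so that neither $\omega\to 0$ nor a removable-singularity reduction is available; one must then balance a refined harmonic-measure estimate against the fact that (i) holds \emph{simultaneously} at all these accumulating boundary pieces, and this balancing act is, I believe, the heart of the matter.
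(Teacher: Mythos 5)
You should first know that the paper does not prove this statement at all: Theorem~\ref{Fuchs} is quoted as known background (it is the Phragm\'en--Lindel\"of lemma that Sakai takes from Fuchs), so your attempt can only be judged on its own merits, and on those merits it has a genuine gap at its central step --- in addition to the one you flag yourself.

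The main problem is the passage from the generalized maximum principle to the inequality $\log|f(z)|\le(\log\alpha)\,\omega(z)$. What the maximum principle actually gives for the bounded-above subharmonic function $v=\log|f|+\beta\log|\cdot|$ (boundary data $\le 0$ on $\partial\Omega\cap D(0,1)\setminus\{0\}$, $\le\log\alpha$ on the circle, the polar point $0$ ignored) is $v\le(\log\alpha)\,\omega$, i.e.
\[
\log|f(z)|\;\le\;(\log\alpha)\,\omega(z)+\beta\log\tfrac{1}{|z|},
\]
which is hypothesis (ii) again. To ``subtract $\beta\log|\cdot|$ back off'' you need the sharper comparison $v\le\beta\log|\cdot|+(\log\alpha)\,\omega$, and the generalized maximum principle cannot deliver it: the function $\log|f|-(\log\alpha)\,\omega=v-\beta\log|\cdot|-(\log\alpha)\,\omega$ is not known to be bounded above near the exceptional point $0$ --- its boundedness there is essentially the conclusion of the theorem --- so the polar set $\{0\}$ is \emph{not} negligible for it. Equivalently, the ``point of potential theory'' you defer to is the whole difficulty: the defect $\lim_{K\to\infty}P\bigl[\max(\beta\log|\cdot|,-K)\bigr]-\beta\log|\cdot|$ is a nonnegative harmonic function on $G$ vanishing nearly everywhere on $\partial G\setminus\{0\}$ and of size $O(\log(1/|z|))$ at $0$; ruling such a function out is not a consequence of bare regularity of $\zeta_0$ (the $o(\log)$ Phragm\'en--Lindel\"of principle just misses, since the growth is exactly $\beta\log(1/|z|)$), and it is false without some hypothesis at $\zeta_0$ (punctured disk). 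Your parenthetical claim that non-thinness forces power decay of $\omega$ is also false in general --- take $D(0,1)$ minus disks of radius $2^{-n}e^{-n}$ centred at $2^{-n}$: the complement is non-thin at $0$, yet the harmonic measure of $\partial D(0,r)$ decays only like $1/\log(1/r)$, exactly too slowly to absorb the $\beta\log(1/r)$ blow-up by a two-constants iteration. Power decay needs a capacity-density condition, not regularity.

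Of your three cases, only the polar one is actually complete (local boundedness from (i) gives removability, and the anchor points $p_n\to\zeta_0$ with $|f(p_n)|\le1$ kill the pole). The ``regular'' case has the gap above, and the thin-but-non-polar case you concede outright. So what remains unproved is not a fringe configuration but the mechanism by which condition (i), holding simultaneously at boundary points accumulating at $\zeta_0$, defeats the polynomial blow-up permitted by (ii); that mechanism is the theorem. If you want to complete the argument you will need Fuchs's actual device (an iteration across the scales $d_j=\dist(\zeta_0,\partial\Omega\setminus\{\zeta_0\})$-type radii, trading the harmonic-measure deficit created by the neighbourhood of each nearby boundary point where $|f|\le1+\epsilon$ against the $\beta\log(d_j/d_{j+1})$ cost of the empty annuli between consecutive boundary points), or simply cite the result as the paper does.
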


In particular, Sakai proved the following, see \cite[Theorem 5.2]{Sak1991}.
\begin{theorem} \label{SakTheorem}
	Let $\Omega\subset D(\zeta_0,r)$	be a bounded open set in ${\C}$ and $\zeta_0$ an non-isolated point of its boundary, $\Gamma=\bd \Omega\cap D(\zeta_0,r)$. Suppose $S$ is a Schwarz function on $\Omega\cup \Gamma,$ that is,
	\begin{enumerate}[{\rm(i)}]
		\item $S$ is holomorphic on $\Omega,$
		\item continuous on $\Omega\cup \Gamma,$ and
		\item $S(\zeta)=\0{\zeta}$ on $\Gamma$. \label{Schwarz_condition}
	\end{enumerate}
	Then, for some small $0<\delta\leq r$ one of the following must occur (where we set $D=D(\zeta_0,\delta)$):
	\begin{enumerate}[{\rm(2a)}]
		\item[{\rm(1)}] $\Omega\cap D$ is simply connected and $\Gamma\cap D$ is a regular real analytic simple arc through $\zeta_0;$
		\item $\Gamma\cap D$ determines uniquely a regular real analytic arc through $\zeta_0;$ $\Gamma\cap D$ is either an infinite proper subset of this arc with $\zeta_0$ as an accumulation point or equal to it; also, $\Omega\cap D=D\setminus \Gamma;$
		\item $\Omega\cap D=\Omega_1\cup \Omega_2$ where $\Omega_1$ and $\Omega_2$ are (open) simply connected and $\bd \Omega_1\cap D$ and $\bd \Omega_2\cap D$ are regular real analytic simple arcs through $\zeta_0$ and tangent at $\zeta_0;$
		\item $\Omega\cap D$ is simply connected and $\Gamma\cap D$ is a regular real analytic simple arc except for a cusp at $\zeta_0;$ the cusp points into $\Omega$.
	\end{enumerate}
\end{theorem}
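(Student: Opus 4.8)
\emph{Proof proposal.} After translating we may take $\zeta_{0}=0$, so continuity of $S$ forces $S(0)=0$, and on $\Gamma$ the identity $S(\zeta)=\0{\zeta}$ gives $|S(\zeta)|=|\zeta|$, hence $|\,S(\zeta)/\zeta\,|=1$ there. The function $S(\zeta)/\zeta$ is holomorphic on $\Omega$ and bounded near $0$, so it satisfies hypotheses (i)--(ii) of Theorem~\ref{Fuchs} with $\beta=1$, and we already conclude $|S(\zeta)|\le(1+o(1))|\zeta|$ throughout $\Omega$ near $0$; in particular $g(\zeta):=\zeta\,S(\zeta)=O(|\zeta|^{2})$ is holomorphic on $\Omega$, continuous on $\Omega\cup\Gamma$, and real and non-negative on $\Gamma$. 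The first structural step is to push all of the pathology onto the single point $0$: at each $p\in(\Gamma\cap D)\setminus\{0\}$ a Morera/Schwarz-reflection argument based on $S(\zeta)=\0{\zeta}$ shows that, in a small disk about $p$, either $\Omega$ lies locally on one side of a regular real analytic simple arc carrying $\Gamma$ or $\Omega$ is locally that disk minus a relatively closed subset of such an arc, and in both cases $S$ continues holomorphically across $(\Gamma\cap D)\setminus\{0\}$. It then remains to describe $\Omega\cap D(0,\delta)$ for small $\delta$ and to rule out accumulation of ``bad'' points at $0$.

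Next I would split according to whether $\0{\Omega}\supseteq D(0,\delta)$ (equivalently, whether the complement of $\0{\Omega}$ fails to be ``fat'' near $0$). If so, $\Omega\cap D=D\setminus\Gamma$ with $\Gamma$ a locally removable thin set by the previous step, so $S$ extends holomorphically to $D(0,\delta)$; then $\phi(\zeta):=S(\zeta)-\0{\zeta}$ is real analytic and vanishes on the infinite set $\Gamma$ accumulating at $0$, and the structure theory of real analytic sets forces $\{\phi=0\}$ to contain a regular real analytic branch through $0$ on which $\Gamma\cap D$ sits --- this is exactly case~(2). In the opposite (``fat complement'') regime one works with the connected components of $\Omega\cap D$ whose closure contains $0$. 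Mapping $\D$ conformally onto such a component $\Omega^{*}$ by $\varphi$ and pulling $S$ back, one obtains $\tilde S:=S\comp\varphi$ holomorphic on $\D$ with $\tilde S(w)=\0{\varphi(w)}$ on the boundary arcs above $\Gamma$; combining $|S(\zeta)|=|\zeta|$ with Koebe-type distortion estimates for $\varphi$ produces polynomial growth of the kind demanded by hypothesis~(ii) of Theorem~\ref{Fuchs}, and the Phragm\'en--Lindel\"of principle stated there then forces $\varphi(w)-0\sim c\,(w-w_{0})^{\alpha}$ with $\alpha\in(0,2]$ at each prime end $w_{0}$ lying over $0$.

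With power asymptotics in hand, reflection pins down the local model. Reflecting $\tilde S$ across $\partial\D$, the holomorphic function $\0{\tilde S(1/\0{w})}$ agrees with $\varphi$ on the relevant circular arc, so $\varphi$ continues holomorphically past $w_{0}$; this is possible only if $\alpha$ is a positive integer, hence $\alpha\in\{1,2\}$. When $\alpha=1$ the extension is conformal at $w_{0}$ and $\bd\Omega^{*}$ is a regular real analytic simple arc through $0$; when $\alpha=2$ the extension has a simple critical point and the boundary doubles back into $\Omega^{*}$, producing an inward-pointing cusp at $0$. One then checks that a component can carry only one or two prime ends over $0$ and that only finitely many components can cluster there: a single component with one prime end gives case~(1) if $\alpha=1$ and case~(4) if $\alpha=2$; a single component with two prime ends yields two regular real analytic arcs through $0$ which, because $\Omega^{*}$ cannot overlap itself, must be tangent at $0$ --- this is case~(3).

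I expect the real obstacle to be the finiteness-and-rigidity core of the ``fat complement'' regime: excluding infinitely many components, or infinitely many prime ends on one component, accumulating at $0$; and upgrading the crude information $|S(\zeta)|=|\zeta|$ together with the distortion bounds to the clean power asymptotics for $\varphi$ that the reflection step consumes. Both demand iterating Phragm\'en--Lindel\"of-type estimates well beyond the single application of Theorem~\ref{Fuchs} --- tracking how the Schwarz function transports the smallness of $S$ into precise geometric information about $\Gamma$ --- and this is, I believe, the technical heart of Sakai's argument.
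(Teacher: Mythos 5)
This statement is Sakai's theorem, which the paper does not prove at all: it is quoted verbatim from \cite[Theorem 5.2]{Sak1991} and used as a black box, so there is no internal proof to compare your attempt against. Judged on its own terms, your proposal is a reasonable reconstruction of the broad shape of Sakai's argument (the auxiliary function $\zeta S(\zeta)=|\zeta|^2$ on $\Gamma$, which is exactly the $\Phi_1$ of Theorem~\ref{SakProposition}; the dichotomy between the ``thin complement'' case (2a) and the ``fat complement'' cases; conformal mapping plus reflection to get local models with $\alpha\in\{1,2\}$), but it is not a proof, and the gaps are genuine rather than routine.

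Two of them you name yourself --- the power asymptotics $\varphi(w)\sim c(w-w_0)^{\alpha}$ at prime ends over $0$, and the finiteness of components and prime ends clustering at $0$ --- and neither follows from a further application of Theorem~\ref{Fuchs} as stated; this is precisely where Sakai's ``very subtle tricks'' live (he instead proves univalence of $zS(z)$ or $\sqrt{zS(z)}$ near $0$ and reads off $\Gamma$ as the preimage of $[0,\infty)$ under that univalent map). A third gap you do not flag: your ``first structural step,'' asserting that at every $p\in(\Gamma\cap D)\setminus\{0\}$ a Morera/Schwarz-reflection argument yields a regular real analytic arc, is circular. The point $0$ plays no distinguished role in the hypotheses --- $p$ is just another non-isolated boundary point carrying a Schwarz function --- so establishing the local structure at $p$ is the theorem itself, and Morera is unavailable because $\Gamma$ is a priori only a closed set, not a rectifiable curve. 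Relatedly, your opening estimate $|S(\zeta)|\le(1+o(1))|\zeta|$ on all of $\Omega$ near $0$ does not follow from one application of Theorem~\ref{Fuchs}: that theorem only controls the $\limsup$ at the single point $\zeta_0$, and a maximum-principle upgrade would need control of $|S(z)/z|$ on $\Omega\cap\partial D(0,\rho)$, which you do not have. If you want to present this result, cite Sakai; a self-contained proof would require substantially more than this outline.
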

\noindent Recall that a \emph{regular arc} means a differentiable arc whose derivative never vanishes and \emph{simple} means that it is parametrized by an injective continuous function.

\begin{rems} \label{Sak_cusp_case}
	 Here is an example of a cusp of (2c) at $\zeta_0=0$ with Schwarz function. There exist analytic functions $T$ on $\{|z|\leq \eta\}$, for some $\eta>0$, that have a zero of order $2$ at $0$, are univalent on closed upper half-disk ${K_\eta\equiv\big\{|z|\leq \eta\such\Im(z)\geq0\big\}}$,	and satisfy $\Gamma\cap D\subset T(-\eta,\eta)$ and $T(K_\eta)\subset \Omega\cup \Gamma$. In fact, it is easy to construct such functions. Every such $T$ leads to a Schwarz function on the domain ${\Omega=T(\{|z|< \eta,\ \Im z>0\})}$, which has two analytic arcs forming a cusp $\Gamma$ at~$0$. In order to have $S(\zeta)=\bar \zeta$ on $\Gamma$, it suffices to have a function analytic in $\{|z|< \eta, \Im z>0\}$ and continuous up to $(-\eta, \eta)$ such that ${A(x)=\overline{T(x)}}$, $x\in (-\eta, \eta)$. Having such an $A$ we set $S=A\circ T^{-1}$ on~$\Omega$. On the other hand, using that $T$ is analytic in the whole ball $\{|z|\leq \eta\}$, we can choose $A$ as follows: $A(z)= \overline{T(\bar z)}$. Moreover, Sakai \cite{Sak1991} showed that every Schwarz function on a cusp domain appears because of an analytic function~$T$ as above.
	 
	The converse of this theorem also holds, in the sense that if any of the conditions (1), (2a), (2b), or (2c) is satisfied, then $\Omega$ admits a Schwarz function.
\end{rems}

In order to distinguish between the cases, Sakai also showed an auxiliary result \cite[Proposition 5.1]{Sak1991}, which we will also use here.
\begin{theorem} \label{SakProposition}
	Set $D'=D(0,r)$. Let $\Omega'\subset D'$ be an open set and $0$ an accumulation point of its boundary, $\Gamma'=\bd \Omega'\cap D'$. Then, for some $r'\leq r,$ either
	\begin{enumerate}[{\rm(1)}]
		\item there exists a Schwarz function, $S_t,$ of $(\Omega'\cup \Gamma')\cap D(0,r')$ at $0$ if and only if there exists a function $\Phi_1$ defined on $(\Omega'\cup \Gamma')\cap D(0,\delta)$ for some $\delta>0$ such that
		\begin{enumerate}[{\rm(i)}]
			\item $\Phi_1$ is holomorphic and univalent in $\Omega'\cap D(0,\delta),$
			\item $\Phi_1$ is continuous on $(\Omega'\cup \Gamma')\cap D(0,\delta),$
			\item $\Phi_1(\zeta)=|\zeta|^2$ on $\Gamma'\cap D(0,\delta)$
		\end{enumerate}
		or
		\item there exists a Schwarz function, $S_t,$ of $(\Omega'\cup \Gamma')\cap D(0,r')$ at $0$ if and only if there exists a function $\Phi_2$ defined on $(\Omega'\cup \Gamma')\cap D(0,\delta)$ for some $\delta>0$ such that
		\begin{enumerate}[{\rm(i')}]
			\item $\Phi_2$ is holomorphic and univalent in $\Omega'\cap D(0,\delta),$
			\item $\Phi_2^2$ is continuous on $(\Omega'\cup \Gamma')\cap D(0,\delta),$
			\item $\Phi_2^2(\zeta)=|\zeta|^2$ on $\Gamma'\cap D(0,\delta),$
			\item $\Phi_2(\Omega'\cap D(0,\delta))\cup (-\epsilon,\epsilon)$ contains a neighbourhood of $0$ for $\epsilon>0$.
		\end{enumerate}
	\end{enumerate}
	In particular, the functions $\Phi_1,\Phi_2$ are related to $S_t$ by $\Phi_1(z)=zS_t(z)$ and $\Phi_2(z)=\sqrt{zS_t(z)}$.
\end{theorem}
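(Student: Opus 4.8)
The plan is to turn the boundary condition $S_t(\zeta)=\bar\zeta$ into the normalized condition $\Phi(\zeta)=|\zeta|^2$ by the substitution $\Phi=zS_t$, and to let the dichotomy (1)/(2) record whether $\Phi$ itself is univalent near $0$ or whether one must pass to a single-valued branch of $\sqrt{\Phi}$. For the two ``if'' implications the univalence plays no role: given $\Phi_1$ satisfying (i)--(iii), set $S_t(z):=\Phi_1(z)/z$, which is holomorphic on $\Omega'\cap D(0,\delta)$ (as $0\notin\Omega'$) and on $\Gamma'\setminus\{0\}$ equals $|\zeta|^2/\zeta=\bar\zeta$; the only point to check is $S_t(z)\to0$ as $z\to0$, and this follows from Theorem~\ref{Fuchs} applied to $g:=\Phi_1/z^2$ --- on $\Gamma'\setminus\{0\}$ one has $|g|=|\zeta|^2/|\zeta|^2=1$, and near $0$, $|g(z)|\le|z|^{-2}$ because $\Phi_1$ is continuous with $\Phi_1(0)=0$ and hence bounded there --- which yields $|\Phi_1(z)|\le(1+o(1))|z|^2$, whence $S_t(z)=\Phi_1(z)/z\to0$. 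The case (2) is identical with $S_t:=\Phi_2^2/z$, using that $\Phi_2^2$ is continuous and equals $|\zeta|^2$ on $\Gamma'$; here one may take $r'=\delta$.

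For the ``only if'' direction, suppose a Schwarz function $S_t$ on $(\Omega'\cup\Gamma')\cap D(0,r')$ is given and put $h:=zS_t$. Then $h$ is holomorphic on $\Omega'\cap D(0,r')$, continuous up to $\Gamma'$, satisfies $h(\zeta)=|\zeta|^2\in[0,\infty)$ on $\Gamma'$ (so $h$ sends $\Gamma'$ into the nonnegative real axis and $h(0)=0$), and is not identically zero (otherwise $S_t\equiv0$ and $\Gamma'\subseteq\{0\}$, contradicting that $0$ is an accumulation point of $\Gamma'$). Theorem~\ref{Fuchs} applied to $h/z^2$ gives $\limsup_{z\to0}|h(z)|/|z|^2\le1$. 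The plan is then to combine this growth bound with the positivity $h(\Gamma')\subseteq[0,\infty)$ and an argument-principle count along $\partial\big(\Omega'\cap D(0,\delta)\big)$ --- on the $\Gamma'$-part $h$ stays on a half-line and so contributes no net winding about a point $w_0\notin[0,\infty)$, while on the circular part $h$ winds exactly the local degree of $h$ at $0$ --- to conclude that, after shrinking $\delta$, $h$ is at most $2$-valent on $\Omega'\cap D(0,\delta)$. If $h$ turns out to be univalent there, take $\Phi_1:=h$: then (i)--(iii) hold and we are in case (1). If $h$ is genuinely $2$-to-$1$, one shows that $\Omega'\cap D(0,\delta)$ may be taken simply connected and $h$ zero-free on it, so a single-valued branch $\Phi_2:=\sqrt h$ exists; it is univalent because it unfolds the $2$-to-$1$ map $h$, one has $\Phi_2^2=h$ continuous with $\Phi_2^2=|\zeta|^2$ on $\Gamma'$, and $\Phi_2\big(\Omega'\cap D(0,\delta)\big)$ is a neighbourhood of $0$ slit along a half-line issuing from $0$, which is exactly (iv$'$). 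In both cases the stated relations $\Phi_1=zS_t$ and $\Phi_2=\sqrt{zS_t}$ hold by construction.

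The heart of the matter is the ``only if'' direction, and above all the claim that $h=zS_t$ is at most $2$-valent near $0$, together with the two structural facts used when it is $2$-to-$1$: that $\Omega'\cap D(0,\delta)$ may be taken simply connected, and that $S_t$ has no zeros accumulating at $0$ (so that $\sqrt h$ is single-valued). A priori $\Gamma'$ is merely a closed set clustering at $0$ and $\Omega'$ may be disconnected or may wind around $0$, so these facts do not follow from the growth bound $\limsup|h(z)|/|z|^2\le1$ alone; obtaining a matching lower bound $|h(z)|\gtrsim|z|^2$, ruling out local degree $\ge 3$ at $0$, and ruling out interior zeros of $S_t$ near $0$ all require exploiting the positivity $h(\Gamma')\subseteq[0,\infty)$ together with Phragm\'en--Lindel\"of and maximum-principle arguments --- for instance applied to $z/S_t$ on the regions where $S_t\neq0$ --- and this is the technically delicate part. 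Once $h$ or $\sqrt h$ is known to be univalent, identifying its image as a slit neighbourhood of $0$ and verifying~(iv$'$) are routine.
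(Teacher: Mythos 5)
First, a point of reference: the paper does not prove this statement at all --- it is quoted verbatim as Sakai's Proposition~5.1 from \cite{Sak1991} and used as a black box, so there is no in-paper argument to compare yours against. Your two ``if'' implications are correct and complete: setting $S_t=\Phi_1/z$ (resp.\ $S_t=\Phi_2^2/z$) and applying Theorem~\ref{Fuchs} to $\Phi_1/z^2$ to force $S_t(z)\to 0=\bar 0$ is exactly the Phragm\'en--Lindel\"of device the paper itself uses in Proposition~\ref{monomial_schwarz}, and you are right that univalence plays no role there.

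The ``only if'' direction, however, is where the entire substance of the proposition lies, and your proposal does not prove it. The argument-principle count you describe presupposes that $\partial\bigl(\Omega'\cap D(0,\delta)\bigr)$ decomposes into a ``$\Gamma'$-part'' and a ``circular part'' along which $h=zS_t$ traces closed curves with computable winding numbers; but in the hypotheses $\Omega'$ is an arbitrary open set and $\Gamma'$ an arbitrary relatively closed set accumulating at $0$ (it may be totally disconnected, non-rectifiable, or all of a punctured disk's complement as in case (2a) of Theorem~\ref{SakTheorem}), so there is no boundary curve to integrate over and no well-defined ``local degree of $h$ at $0$'' until one has already established the structure one is trying to prove. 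The subsequent steps --- that $h$ is at most $2$-valent, that $\Omega'\cap D(0,\delta)$ may be taken simply connected, that $S_t$ is zero-free near $0$ so that $\sqrt{h}$ is single-valued, that $\sqrt{h}$ is univalent, and that its image is a slit neighbourhood giving (iv$'$) --- are each asserted rather than derived, and you say yourself that they ``require exploiting the positivity \dots and this is the technically delicate part.'' That delicate part is the theorem: Sakai's own proof occupies a substantial portion of \cite{Sak1991} and proceeds through a careful analysis of the level sets of $\operatorname{Im}(zS_t)$ and of the components of $\{\operatorname{Im}(zS_t)>0\}$, $\{\operatorname{Im}(zS_t)<0\}$ near $0$, not through a boundary winding count. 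As it stands, your text is a plausible road map with the destination marked but the road unbuilt.
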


Unfortunately, Theorem~\ref{SakProposition} is only valid around $0$ in this form. Nevertheless, we can ``translate'' the setup of Theorem~\ref{SakTheorem} by setting $\Omega'=\Omega-\zeta_0$, $\Gamma'=\Gamma-\zeta_0$ and $S_t(z)=S(z+\zeta_0)-\0{\zeta}_0$ for $z\in \Omega'$. Then, $S_t$ is a Schwarz function on $\Omega'\cup \Gamma'$ at $0$. Cases (1) of the two theorems correspond with one another as do (2a), (2b), and (2c) with (2).

Sakai gave two applications of his results: the first one describes the local structure of the boundary of quadrature domains, while the second one deals with a free boundary problem of classical type, namely, what is the boundary of the set of positivity of a smooth \emph{non-negative} function in the disk such that $\Delta u=1$ on the set $\{u>0\}$.

\smallskip

It is natural to wonder how one can derive similar results for other forms of~\eqref{eqSchwarz}. In this text, we examine three different scenarios for a simply connec\-ted domain $\Omega$. In \S\S\ref{sec_polynomials}, \ref{sec_model_spaces}, \ref{sec_construction} equation \eqref{eqSchwarz} is replaced by
\begin{equation} \label{eqIntroSchwarz}
	f_1(\zeta)=\0{\zeta}f_2(\zeta)\afterline{for all}\zeta\in\bd \Omega
\end{equation}
where $f_1,f_2$ are holomorphic functions continuous up to the boundary. This is closely related to the model subspaces $K_\theta$ and Nevanlinna domains, which will be important here. It is shown that there are domains so that \eqref{eqIntroSchwarz} holds for which $\bd \Omega$ is $C^\infty$ but not real analytic. Further, in \S\ref{sec_phi} we replace the quantity $\0{\zeta}f_2(\zeta)$ with $\Phi(\zeta,\0{\zeta})$, where $\Phi$ is a holomorphic function of two variables, to find that the boundary is locally composed of real analytic arcs. Finally, in \S\ref{sec_u_v} we consider two positive harmonic functions $\cU$ and $\cV$ that are zero on a Jordan arc, $\Gamma$, of the boundary. If their ratio on $\Gamma$ is equal to a real analytic function of the form $|A|^2$, where $A$ is holomorphic, then $\Gamma$ is real analytic itself with the possible exception of some cusps.

Our interests to the problems considered below also was spurred by an application, which originates from complex dynamics. A certain complex dynamics question naturally brought the second author to another \emph{free boundary problem} described in \S\ref{sec_u_v}. After that it was very natural to ask related questions, where the Sakai setup was generalized in yet two other ways. To our surprise the answers were quite different and required different techniques: from the use of Nevanlinna domains and pseudo-continuation to multivalued analytic functions.

\section{Polynomials \& analytic functions} \label{sec_polynomials} 

Let $\Omega$ be an open domain, $\zeta_0$ a non-isolated boundary point of $\Omega$, and let $\Gamma=\bd \Omega\cap D(\zeta_0,r)$ for some $r>0$. Suppose $S$ is a holomorphic function on~$\Omega$ continuous on $\Omega\cup \Gamma$. We start with a simple yet important case. Instead of~\eqref{eqSchwarz}, we consider
\begin{equation} \label{eqPolySchwarz}
	S(\zeta)=\0{\zeta} p(\zeta)\afterline{on}\Gamma,
\end{equation}
where $p$ is a polynomial. We will shortly show that $f(z)=\frac{S(z)}{p(z)}$ is, in fact, a Schwarz function on $\Gamma$.

\begin{lem} \label{translate_to_zero}
	Assume that $S:\Omega\to{\C}$ is holomorphic on $\Omega\subset D(\zeta_0,r),$ continuous on $\Omega\cup \Gamma,$ and that it satisfies
	\[
	S(\zeta)=\0{\zeta} (\zeta-\zeta_0)^n\afterline{on}\Gamma.
	\]
	Then, the function $S_t(z)=S(z+\zeta_0)-\0{\zeta_0}z^n$ is holomorphic on $\Omega-\zeta_0\subset D(0,r),$ continuous on $(\Omega-\zeta_0)\cup (\Gamma-\zeta_0)$ and it satisfies
	\[
	S_t(\zeta)=\0{\zeta} \zeta^n\afterline{on}\Gamma-\zeta_0.
	\]
\end{lem}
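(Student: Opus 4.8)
The plan is to treat this as a pure change of variables: the affine map $T\colon z\mapsto z+\zeta_0$ is entire with entire inverse, and it carries $D(0,r)$ onto $D(\zeta_0,r)$, the set $\Omega-\zeta_0$ onto $\Omega$, the set $\Gamma-\zeta_0$ onto $\Gamma$, and $(\Omega-\zeta_0)\cup(\Gamma-\zeta_0)$ onto $\Omega\cup\Gamma$. So the whole lemma is really a bookkeeping statement about how the hypotheses on $S$ transport along $T$, plus one elementary identity for complex conjugation.

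First I would dispatch the regularity claims. The function $z\mapsto S(z+\zeta_0)=S\comp T(z)$ is a composition of the holomorphic map $T$ with the holomorphic function $S$, hence holomorphic on $T^{-1}(\Omega)=\Omega-\zeta_0$, and continuous on $T^{-1}(\Omega\cup\Gamma)=(\Omega-\zeta_0)\cup(\Gamma-\zeta_0)$. Since $z\mapsto\overline{\zeta_0}\,z^n$ is a polynomial, it is entire and everywhere continuous, so the difference $S_t(z)=S(z+\zeta_0)-\overline{\zeta_0}\,z^n$ is holomorphic on $\Omega-\zeta_0$ and continuous on $(\Omega-\zeta_0)\cup(\Gamma-\zeta_0)$; the inclusion $\Omega-\zeta_0\subset D(\zeta_0,r)-\zeta_0=D(0,r)$ is immediate.

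Then I would verify the boundary relation. Fix $w\in\Gamma-\zeta_0$ and write $w=\zeta-\zeta_0$ with $\zeta\in\Gamma$. Using the hypothesis $S(\zeta)=\overline{\zeta}\,(\zeta-\zeta_0)^n$ and the additivity $\overline{\zeta-\zeta_0}=\overline{\zeta}-\overline{\zeta_0}$,
\[
S_t(w)=S(\zeta)-\overline{\zeta_0}\,w^n=\overline{\zeta}\,(\zeta-\zeta_0)^n-\overline{\zeta_0}\,(\zeta-\zeta_0)^n=(\overline{\zeta}-\overline{\zeta_0})(\zeta-\zeta_0)^n=\overline{(\zeta-\zeta_0)}\,(\zeta-\zeta_0)^n=\overline{w}\,w^n,
\]
which is exactly the asserted identity on $\Gamma-\zeta_0$.

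I do not expect a genuine obstacle: there is no hard estimate or analytic subtlety, only the need to be careful about which translated set plays the role of $\Omega$, of $\Gamma$, and of the ambient disk. The real content is organizational — the lemma is the device that reduces the general base point $\zeta_0$ in \eqref{eqPolySchwarz} to the normalized case $\zeta_0=0$, where results like Theorem~\ref{SakProposition} apply, while keeping the boundary data in the same algebraic form $\overline{\zeta}\,\zeta^n$.
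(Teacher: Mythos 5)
Your proof is correct and is essentially the argument the paper intends: the lemma is stated without proof there, but the analogous Lemma~\ref{move_to_zero} is proved by exactly the same one-line computation $(\overline{\zeta}-\overline{\zeta_0})(\zeta-\zeta_0)^n=\overline{\zeta-\zeta_0}\,(\zeta-\zeta_0)^n$ after translation. The regularity bookkeeping via the affine change of variables is also exactly what is needed.
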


\begin{prop} \label{monomial_schwarz}
	Assume $0\in \Gamma$ is a non-isolated boundary point of ${\Omega\subset D(0,r)}$ and suppose $S$ is a holomorphic function on $\Omega$ continuous on $\Omega\cup \Gamma$ and satisfying
	\[
	S(\zeta)=\0{\zeta} \zeta^n\afterline{on}\Gamma.
	\]
	Then, for any positive $\delta<r$ the function $\frac{S(z)}{z^n}$ is holomorphic on $\Omega\cap D(0,\delta)$ and continuous on $(\Omega\cup \Gamma)\cap D(0,\delta)\setminus\{0\}$. Moreover, the following holds while $z\in \Omega\cup \Gamma\setminus\{0\}$:
	\[
	\lim_{z\to 0} \frac{S(z)}{z^n}=0.
	\]
\end{prop}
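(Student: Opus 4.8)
The plan is to show that $f(z) = S(z)/z^n$ extends to a genuine Schwarz function near $0$, at which point Sakai's structure theorem forces the boundary to be regular real analytic, ruling out the bad local behaviour and giving the vanishing at $0$.

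First I would verify the easy part: on $\Omega \cap D(0,\delta)$ the function $S(z)/z^n$ is clearly holomorphic away from the zero set of $z^n$, i.e. away from $0$, and since $0 \notin \Omega$ (it is a boundary point) the quotient is holomorphic on all of $\Omega \cap D(0,\delta)$. Continuity on $(\Omega \cup \Gamma) \cap D(0,\delta) \setminus \{0\}$ is immediate from continuity of $S$ and the fact that $z^n$ is nonvanishing and continuous there. On $\Gamma \setminus \{0\}$ the defining relation gives $f(\zeta) = S(\zeta)/\zeta^n = \overline{\zeta}$, so $f$ satisfies the Schwarz relation on the punctured arc. The only thing missing to invoke Theorem~\ref{SakTheorem} is continuity at $0$, which is precisely the content of the $\lim_{z\to 0} S(z)/z^n = 0$ assertion (note $\overline{\zeta} \to 0$ as $\zeta \to 0$, so the right value of the extension at $0$ must be $0$).

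The core of the argument is therefore the limit. Here I would use Theorem~\ref{Fuchs} (the Phragm\'en--Lindel\"of / Fuchs-type principle). Fix a small $\delta$. I claim $g(z) = S(z)/z^n$ — or rather a suitably normalized version — satisfies the two hypotheses of Theorem~\ref{Fuchs}. Hypothesis (i): for every $\zeta \in \Gamma \cap D(0,\delta) \setminus \{0\}$ we have $\limsup_{z \to \zeta}|g(z)| = |\overline{\zeta}| = |\zeta| \le \delta$; after dividing by $\delta$ (or by $\sup_{\Gamma \cap D}|\zeta|$) we get $\le 1$. Hypothesis (ii): since $S$ is continuous on the compact set $\overline{\Omega \cap D(0,\delta/2)}$ it is bounded there, say by $M$, so $|g(z)| = |S(z)|/|z|^n \le M |z|^{-n}$, which is exactly the required polynomial-type bound with $\beta = n$. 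Theorem~\ref{Fuchs} then yields $\limsup_{z \to 0}|g(z)| \le \delta$ (after undoing the normalization). Since $\delta > 0$ was arbitrary, $\lim_{z\to 0} g(z) = 0$, which simultaneously establishes continuity at $0$ with value $0$ and the stated limit. (One could alternatively first run Fuchs to get boundedness of $g$ near $0$, conclude $g$ extends to a Schwarz function on $\Omega \cup \Gamma$, apply Theorem~\ref{SakTheorem} to learn $\Gamma$ is real analytic through $0$ so $g(0) = \overline{0} = 0$ by continuity; but the direct Fuchs argument already delivers the limit and is cleaner.)

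The step I expect to be the main obstacle is the uniform bound needed for hypothesis (ii) of Theorem~\ref{Fuchs} together with making the normalization in hypothesis (i) clean: one must be careful that $\Omega$ need not be bounded a priori in the statement, so I would pass to $\Omega \cap D(0,\delta)$ for a fixed small $\delta$, note $S$ is continuous hence bounded on the closure $(\Omega \cup \Gamma) \cap \overline{D(0,\delta)}$ (this set is compact once we intersect with the closed disk, using that $S$ extends continuously to $\Gamma$), and only then extract $M$. A secondary subtlety is that Theorem~\ref{Fuchs} is stated for a holomorphic function on $\Omega$ with a boundary estimate on $\bd\Omega \cap D(\zeta_0,\delta) \setminus \{\zeta_0\}$, and here the relevant boundary piece is $\Gamma = \bd\Omega \cap D(0,r)$; shrinking $\delta$ so that $\bd\Omega \cap D(0,\delta) = \Gamma \cap D(0,\delta)$ and applying the theorem with $\zeta_0 = 0$ handles this. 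Everything else is routine.
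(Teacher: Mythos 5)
Your proof is correct and follows essentially the same route as the paper: apply the Phragm\'en--Lindel\"of principle (Theorem~\ref{Fuchs}) to $S(z)/z^n$, using the interior bound $m|z|^{-n}$ from boundedness of $S$ and the boundary limit $|\overline{\zeta}|\le\delta$ on $\Gamma\cap D(0,\delta)\setminus\{0\}$, to conclude $\limsup|S(z)/z^n|\le\delta$ at $0$ and then let $\delta$ shrink. Your extra care about where the bound $M$ comes from and the normalization by $\delta$ are cosmetic refinements of the same argument, and your parenthetical detour through Sakai's theorem is correctly identified as unnecessary.
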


\begin{proof}
	The function $\frac{S(z)}{z^n}$ is clearly holomorphic on $\Omega\cap D(0,\delta)$ and continuous on $(\Omega\cup \Gamma)\cap D(0,\delta)\setminus\{0\}$ for any $\delta\in(0,r)$. It remains to see what happens at~$0$.
	
	Fix $\delta\in(0,r)$. Since $S$ is bounded on $\Omega\cap D(0,r)$, say by $m$, we get
	\[\left|\frac{S(z)}{z^n}\right|\leq m|z|^{-n}\afterline{on}\Omega\cap D(0,\delta)\]
	and additionally for any $\zeta\in \Gamma\cap D(0,\delta)\setminus\{0\}$ we have
	\[\lim\left|\frac{S(z)}{z^n}\right|=|\0{\zeta}|\leq \delta\afterline{while}\Omega\ni z\to \zeta.\]
	Hence, by the Phragm\'en-Lindel{\"o}f principle \ref{Fuchs} we obtain
	\[\limsup\left|\frac{S(z)}{z^n}\right|\leq \delta\afterline{while}\Omega\ni z\to 0.\]
	This last inequality is true for any positive $\delta<r$ and therefore $\lim\frac{S(z)}{z^n}=0$ as $z\to 0$.
\end{proof}

\begin{cor} \label{polynomial_Schwarz}
	Let $p$ be a complex polynomial. Assume that $\zeta_0\in \Gamma$ is a non-isolated boundary point of $\Omega$ other than zero and suppose $S$ is a holomorphic function of $\Omega\subset D(\zeta_0,r)$ continuous on $\Omega\cup \Gamma$ and satisfying
	\[S(\zeta)=\0{\zeta} p(\zeta)\afterline{on}\Gamma.\]
	Set $f(z)=S(z)/p(z)$ on $\Omega\cup \Gamma\setminus\{\zeta_0\}$ and $f(\zeta_0)=\0{\zeta_0}$. Then, $f$ is a Schwarz function of $\Omega\cup \Gamma$ on $D(\zeta_0,r)$ for sufficiently small $r>0$.
\end{cor}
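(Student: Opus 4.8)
\emph{Proof proposal.} The plan is to reduce everything to Proposition~\ref{monomial_schwarz} by stripping from $p$ the factor that does not vanish at $\zeta_0$. One may assume $p\not\equiv 0$, since otherwise $S\equiv 0$ on $\Gamma$ and there is nothing to normalize. As a first step I would shrink $r$ so that $D(\zeta_0,r)$ lies inside the original domain of $S$ and contains no zero of $p$ other than (possibly) $\zeta_0$ itself; this is legitimate because $p$ has only finitely many zeros. Note that $\zeta_0$ remains a non-isolated boundary point of $\Omega\cap D(\zeta_0,r)$.

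If $p(\zeta_0)\neq 0$ the claim is immediate: $1/p$ is holomorphic and nonvanishing on $D(\zeta_0,r)$, so $f=S/p$ is holomorphic on $\Omega\cap D(\zeta_0,r)$, continuous on $(\Omega\cup\Gamma)\cap D(\zeta_0,r)$, and $f(\zeta)=\overline{\zeta}\,p(\zeta)/p(\zeta)=\overline{\zeta}$ on $\Gamma$, with the assigned value $f(\zeta_0)=\overline{\zeta_0}$ forced by continuity. The substantive case is $p(\zeta_0)=0$. There I would write $p(z)=(z-\zeta_0)^n q(z)$ with $n\geq 1$ and $q$ a polynomial that does not vanish anywhere on $D(\zeta_0,r)$ (by the choice of $r$), and set $\widetilde S := S/q$. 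Then $\widetilde S$ is holomorphic on $\Omega\cap D(\zeta_0,r)$, continuous up to $\Gamma$, and satisfies $\widetilde S(\zeta)=\overline{\zeta}\,(\zeta-\zeta_0)^n$ on $\Gamma$. Applying Lemma~\ref{translate_to_zero} to $\widetilde S$ produces $S_t(z)=\widetilde S(z+\zeta_0)-\overline{\zeta_0}\,z^n$, holomorphic on $(\Omega-\zeta_0)\cap D(0,r)$ and continuous up to $\Gamma-\zeta_0$, with $S_t(\zeta)=\overline{\zeta}\,\zeta^n$ on $\Gamma-\zeta_0$; since $0$ is a non-isolated boundary point of $\Omega-\zeta_0$, Proposition~\ref{monomial_schwarz} applies and gives $S_t(z)/z^n\to 0$ as $z\to 0$ within $(\Omega\cup\Gamma)-\zeta_0$.

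From here the conclusion is bookkeeping: $f(z+\zeta_0)=\widetilde S(z+\zeta_0)/z^n=S_t(z)/z^n+\overline{\zeta_0}\to\overline{\zeta_0}=f(\zeta_0)$ as $z\to 0$, so $f$ extends continuously to $\zeta_0$ with the prescribed value; $f$ is holomorphic on $\Omega\cap D(\zeta_0,r)$ because its only possible pole, $\zeta_0$, does not lie in $\Omega$; and $f(\zeta)=\overline{\zeta}$ on $\Gamma\cap D(\zeta_0,r)\setminus\{\zeta_0\}$ because $p$ is nonzero there. Hence $f$ is a Schwarz function of $\Omega\cup\Gamma$ on $D(\zeta_0,r)$. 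I do not expect a genuine obstacle here: all of the analytic weight sits in Proposition~\ref{monomial_schwarz} (and, behind it, the Phragmén–Lindelöf principle of Theorem~\ref{Fuchs}). The only points demanding care are to shrink $r$ at the outset so that $\zeta_0$ is the sole zero of $p$ in $D(\zeta_0,r)$ — this simultaneously makes $q$ invertible near $\zeta_0$ and guarantees $f(\zeta)=\overline{\zeta}$ on all of $\Gamma\cap D(\zeta_0,r)\setminus\{\zeta_0\}$ — and to observe that the stipulated value $f(\zeta_0)=\overline{\zeta_0}$ is exactly the limit forced by $S_t(z)/z^n\to 0$, so the definition of $f$ is consistent.
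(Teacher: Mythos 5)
Your argument is correct and is essentially the paper's own proof: your $\widetilde S=S/q$ is exactly the paper's auxiliary function $S_n(z)=S(z)(z-\zeta_0)^n/p(z)$, and the reduction via Lemma~\ref{translate_to_zero} and Proposition~\ref{monomial_schwarz} proceeds identically. The only (harmless) cosmetic difference is that you make the choice of $r$ and the bookkeeping at the end slightly more explicit.
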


\begin{proof}
	Take $r$ so small that $p$ has no zeros on $\0{D(\zeta_0,r)}\setminus\{\zeta_0\}$. If $p(\zeta_0)\neq0$, the result is immediate.
	
	If $p(\zeta_0)=0$, we only need to show that $f$ is continuous on $(\Omega\cup \Gamma)\cap D(\zeta_0,r)$. Denote by $n$ the order of $\zeta_0$ as a zero of $p$ and consider the function 
	\[S_n(z)=S(z)\frac{(z-\zeta_0)^n}{p(z)}.\]
	$S_n$ is holomorphic on $\Omega$, is continuous on $\Omega\cup \Gamma$, and satisfies
	\[S_n(\zeta)=\0{\zeta}(\zeta-\zeta_0)^n\afterline{on}\Gamma.\]
	From Lemma \ref{translate_to_zero} we get 
	\[(S_n)_t(\zeta)=\0{\zeta}\zeta^n\afterline{on}\Gamma-\zeta_0\]
	and from Proposition \ref{monomial_schwarz} we deduce that while 
	\begin{align*}
		\lim_{z\to 0}\frac{(S_n)_t(z)}{z^n}=0 & \implies \lim_{z\to 0}\frac{S_n(z+\zeta_0)-\0{\zeta_0}z^n}{z^n}=0\\
		& \implies \lim_{z\to \zeta_0}\frac{S_n(z)-\0{\zeta_0}(z-\zeta_0)^n}{(z-\zeta_0)^n}=0\\
		& \implies\lim_{z\to \zeta_0}f(z)=\lim_{z\to \zeta_0}\frac{S_n(z)}{(z-\zeta_0)^n}=\0{\zeta_0}
	\end{align*}
$z\in \Omega$,	and the conclusion follows.
\end{proof}

Notice that the same proof works with $p$ replaced by any function $F$ that is analytic in a neighbourhood of $\zeta_0$. This along with Lemma~\ref{translate_to_zero} give us the following corollary.

\begin{cor} \label{analytic_schwarz}
	Assume $\zeta_0\in \Gamma$ is a non-isolated boundary point of ${\Omega\subset D(\zeta_0,r)}$. Suppose $F$ is a function analytic around $\zeta_0$ and $S$ is a holomorphic function on~$\Omega$ continuous on $\Omega\cup \Gamma$ and satisfying
	\[S(\zeta)=\0{\zeta} F(\zeta)\afterline{on}\Gamma.\]
	Set $f(z)=S(z)/F(z)$ on $(\Omega\cup \Gamma)\cap D(\zeta_0,\delta)\setminus\{\zeta_0\}$ for some sufficiently small $\delta>0$ and $f(\zeta_0)=\0{\zeta_0}$. Then, $f$ is a Schwarz function of $\Omega\cup \Gamma$ on $D(\zeta_0,\delta)$.
\end{cor}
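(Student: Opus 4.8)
The plan is to reduce Corollary~\ref{analytic_schwarz} to the already-established polynomial case (Corollary~\ref{polynomial_Schwarz}) by peeling off the zero of $F$ at $\zeta_0$ and treating the nonvanishing remainder as a harmless analytic factor. First I would fix $\delta>0$ small enough that $F$ is analytic on $\overline{D(\zeta_0,\delta)}$ and has no zeros there except possibly at $\zeta_0$; let $n\ge 0$ be the order of vanishing of $F$ at $\zeta_0$, so that $F(z)=(z-\zeta_0)^n G(z)$ with $G$ analytic and nonvanishing on $\overline{D(\zeta_0,\delta)}$ (shrinking $\delta$ if necessary). The boundary identity $S(\zeta)=\0{\zeta}F(\zeta)$ on $\Gamma$ then reads $S(\zeta)/G(\zeta)=\0{\zeta}(\zeta-\zeta_0)^n$ on $\Gamma$, and since $1/G$ is analytic and bounded near $\zeta_0$, the function $\widetilde S(z):=S(z)/G(z)$ is holomorphic on $\Omega\cap D(\zeta_0,\delta)$, continuous on $(\Omega\cup\Gamma)\cap D(\zeta_0,\delta)$, bounded there, and satisfies the monomial-type relation $\widetilde S(\zeta)=\0{\zeta}(\zeta-\zeta_0)^n$ on $\Gamma$.

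Next I would invoke the monomial machinery already set up. If $n=0$ there is nothing to prove: $f=S/F=\widetilde S$ is holomorphic on $\Omega$, continuous up to $\Gamma$, and equals $\0\zeta$ on $\Gamma$, so it is a Schwarz function directly. If $n\ge 1$, apply Lemma~\ref{translate_to_zero} to $\widetilde S$ (legitimate because $\widetilde S$ has exactly the hypotheses of that lemma) to get $\widetilde S_t(z)=\widetilde S(z+\zeta_0)-\0{\zeta_0}z^n$ holomorphic on $\Omega-\zeta_0$, continuous up to $\Gamma-\zeta_0$, with $\widetilde S_t(\zeta)=\0\zeta\,\zeta^n$ on $\Gamma-\zeta_0$. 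Then Proposition~\ref{monomial_schwarz}, applied with $\widetilde S_t$ in place of $S$, yields $\lim_{z\to 0}\widetilde S_t(z)/z^n=0$ as $z\to 0$ in $\Omega-\zeta_0$. Untangling the translation exactly as in the proof of Corollary~\ref{polynomial_Schwarz} gives $\lim_{z\to\zeta_0}\widetilde S(z)/(z-\zeta_0)^n=\0{\zeta_0}$, i.e. $\lim_{z\to\zeta_0}S(z)/F(z)=\0{\zeta_0}$, so the extension $f$ with $f(\zeta_0)=\0{\zeta_0}$ is continuous on $(\Omega\cup\Gamma)\cap D(\zeta_0,\delta)$. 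Holomorphy of $f$ on $\Omega\cap D(\zeta_0,\delta)$ is clear since $F\ne 0$ off $\zeta_0$, and the boundary condition $f(\zeta)=\0\zeta$ on $\Gamma\setminus\{\zeta_0\}$ is immediate from the hypothesis, extending to $\zeta_0$ by continuity; hence $f$ is a Schwarz function on $\Omega\cup\Gamma$ in $D(\zeta_0,\delta)$.

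I expect essentially no serious obstacle here — the statement is a genuine corollary, and the only points needing a little care are bookkeeping ones: choosing $\delta$ so that $G=F/(z-\zeta_0)^n$ is both analytic and zero-free on the closed disk, and checking that dividing $S$ by $G$ preserves the three hypotheses (holomorphy, continuity up to $\Gamma$, boundedness) needed to apply Lemma~\ref{translate_to_zero} and Proposition~\ref{monomial_schwarz}. Boundedness of $\widetilde S$ follows from boundedness of $S$ on $\Omega\cap D(\zeta_0,r)$ together with the bound $|1/G|\le C$ on the closed disk, which is where the choice of $\delta$ is used. The translation-unwinding step is purely the chain of implications already displayed in the proof of Corollary~\ref{polynomial_Schwarz}, so I would simply remark that it carries over verbatim with $F$ in place of $p$ rather than rewriting it. If one wanted to avoid even introducing $G$, an alternative is to note that the proof of Corollary~\ref{polynomial_Schwarz} never used that $p$ was a polynomial beyond: $p$ analytic near $\zeta_0$, $p$ zero-free on a punctured closed neighbourhood of $\zeta_0$, and $(z-\zeta_0)^n/p(z)$ analytic and bounded near $\zeta_0$ — all of which hold for an arbitrary analytic $F$ with a zero of order $n$ at $\zeta_0$; this is precisely the remark preceding the corollary, so the cleanest write-up is to say the proof of Corollary~\ref{polynomial_Schwarz} applies mutatis mutandis.
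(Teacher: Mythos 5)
Your proposal is correct and follows essentially the same route as the paper: your $\widetilde S=S/G$ is exactly the paper's auxiliary function $S_n(z)=S(z)(z-\zeta_0)^n/p(z)$ with $p$ replaced by $F$, and the paper itself disposes of this corollary by remarking that the proof of Corollary~\ref{polynomial_Schwarz} works verbatim for any $F$ analytic near $\zeta_0$, which is precisely your closing observation. No gaps.
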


The converse of this corollary also holds true in the sense that if $\Gamma$ has certain shape, in particular, if it satisfies {(1)}, {(2a)}, {(2b)}, or {(2c)} of \ref{SakTheorem}, then there is a Schwarz function $f$ of $\Omega\cup \Gamma$ at $\zeta_0$ such that $S(\zeta)=\0{\zeta} F(\zeta)$ on $\Gamma$ where $S=Ff$.

In fact, we can slightly modify the same proof to get a little more, again through the Phragm\'en-Lindel{\"o}f principle \ref{Fuchs}.

\begin{cor}
	Let $p$ be a polynomial, $F$ a function analytic in a neighbourhood of $\clos{\Omega},$ and $S$ a function holomorphic on the (bounded) set $\Omega$ and continuous on $\Omega\cup \Gamma$. Suppose that for all $\zeta\in \Gamma$ we have 
	\[S(\zeta)=p(\0{\zeta})F(\zeta).\]
	Then, for every non-isolated point $\zeta_0$ of the boundary $\Gamma$ for which $p'(\zeta_0)\not=0,$ there is some $\delta>0$ such that the function $p^{-1}(S/F)$ is a Schwarz function of $\Omega\cup \Gamma$ on $D(\zeta_0,\delta)$.
\end{cor}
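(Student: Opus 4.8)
The plan is to peel off $F$ first and then $p$. Write $h:=S/F$. After shrinking $r$ I may assume $F$ is zero-free on $\0{D(\zeta_0,r)}\setminus\{\zeta_0\}$ (the zeros of $F$ are isolated) and that $S$ is bounded, say by $m$, near $\zeta_0$ (for $\rho<r$ the set $(\Omega\cup\Gamma)\cap\0{D(\zeta_0,\rho)}$ is compact and $S$ is continuous on it). Then $h$ is holomorphic on $\Omega\cap D(\zeta_0,r)$, continuous on $(\Omega\cup\Gamma)\cap D(\zeta_0,r)\setminus\{\zeta_0\}$, and the relation $S(\zeta)=p(\0{\zeta})F(\zeta)$ on $\Gamma$ rewrites as
\[
h(\zeta)=p(\0{\zeta})\afterline{on}\Gamma\cap D(\zeta_0,r)\setminus\{\zeta_0\}.
\]
So $h$ plays the role of a Schwarz function for the map $\zeta\mapsto p(\0{\zeta})$ rather than for $\zeta\mapsto\0{\zeta}$, and the idea is to recover $\0{\zeta}$ from it by applying a holomorphic inverse of $p$.

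The first genuine step --- trivial when $F(\zeta_0)\neq0$, and needed only when $F(\zeta_0)=0$ --- is to show that $h$ extends continuously to $\zeta_0$ with value $p(\0{\zeta_0})$; here I would repeat the Phragm\'en--Lindel\"of argument of Proposition~\ref{monomial_schwarz}. If $k\ge1$ is the order of the zero of $F$ at $\zeta_0$, then $|F(z)|\ge c\,|z-\zeta_0|^{k}$ near $\zeta_0$ for some $c>0$, which gives the polynomial bound $|h(z)-p(\0{\zeta_0})|\le (m/c)|z-\zeta_0|^{-k}+|p(\0{\zeta_0})|$ on $\Omega\cap D(\zeta_0,\delta)$, while for $\zeta\in\Gamma\cap D(\zeta_0,\delta)\setminus\{\zeta_0\}$ continuity of $h$ there gives $\limsup_{\Omega\ni z\to\zeta}|h(z)-p(\0{\zeta_0})|=|p(\0{\zeta})-p(\0{\zeta_0})|\le L\delta$, with $L$ a bound for $|p'|$ near $\0{\zeta_0}$. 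Feeding $(h-p(\0{\zeta_0}))/(L\delta)$ into Theorem~\ref{Fuchs} yields $\limsup_{\Omega\ni z\to\zeta_0}|h(z)-p(\0{\zeta_0})|\le L\delta$, and letting $\delta\downarrow0$ forces $h(z)\to p(\0{\zeta_0})$. So, after this step, $h$ is continuous on $(\Omega\cup\Gamma)\cap D(\zeta_0,\delta)$ with $h(\zeta_0)=p(\0{\zeta_0})$.

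The second step is to compose with the inverse of $p$. Note that $p^{-1}\comp h$ is formed near the value $h(\zeta_0)=p(\0{\zeta_0})$, so what is really needed is that $p$ be conformal near $\0{\zeta_0}$ --- this is the role of the non-vanishing hypothesis on $p'$ (since $p(\0{\zeta})=\0{\widetilde p(\zeta)}$ for the conjugate-coefficient polynomial $\widetilde p$, it is equivalently the non-vanishing of $(\widetilde p)'$ at $\zeta_0$). Then I would pick the holomorphic branch $q$ of $p^{-1}$ on a disc $D(p(\0{\zeta_0}),\varrho)$ with $q(p(\0{\zeta_0}))=\0{\zeta_0}$, so that $q\comp p=\mathrm{id}$ on a neighbourhood $U$ of $\0{\zeta_0}$; using continuity of $h$ at $\zeta_0$, shrink $\delta$ so that $h$ maps $(\Omega\cup\Gamma)\cap D(\zeta_0,\delta)$ into $D(p(\0{\zeta_0}),\varrho)$ and $\0{\zeta}\in U$ for all $\zeta\in\Gamma\cap D(\zeta_0,\delta)$. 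Then $f:=q\comp h=p^{-1}(S/F)$ is holomorphic on $\Omega\cap D(\zeta_0,\delta)$, continuous on $(\Omega\cup\Gamma)\cap D(\zeta_0,\delta)$, and satisfies $f(\zeta)=q(p(\0{\zeta}))=\0{\zeta}$ on $\Gamma\cap D(\zeta_0,\delta)$; that is, $f$ is a Schwarz function of $\Omega\cup\Gamma$ on $D(\zeta_0,\delta)$. I expect the middle step to be the only delicate point: mere boundedness of $h$ near $\zeta_0$ would not guarantee that its values lie in the disc $D(p(\0{\zeta_0}),\varrho)$ on which the branch $q$ is defined, and it is Theorem~\ref{Fuchs} that upgrades the crude estimate to the exact boundary value --- exactly the phenomenon already exploited in Proposition~\ref{monomial_schwarz}.
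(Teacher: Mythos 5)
Your proof is correct and follows exactly the route the paper indicates for this corollary (the paper gives no proof, saying only that one should ``slightly modify the same proof'' via the Phragm\'en--Lindel\"of principle~\ref{Fuchs}): you recover the boundary value $p(\0{\zeta_0})$ of $S/F$ at $\zeta_0$ by the same estimate as in Proposition~\ref{monomial_schwarz} and Corollary~\ref{polynomial_Schwarz}, and then compose with a local holomorphic branch of $p^{-1}$. Your parenthetical remark that the hypothesis must be read as $p'(\0{\zeta_0})\neq 0$ (equivalently, non-vanishing of $\widetilde p\,'$ at $\zeta_0$ for the conjugate-coefficient polynomial $\widetilde p$) rather than literally $p'(\zeta_0)\neq 0$ is the correct reading of the statement.
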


We wish to examine what happens in the more general case where $p$ in \eqref{eqPolySchwarz} is replaced with any analytic function of $\Omega$ continuous on its boundary, but not necessarily analytic on that boundary. More specifically, suppose that $f_1$ and $f_2$ are functions analytic on $\Omega$, continuous on $\Omega\cup \Gamma$, and satisfying
\begin{equation} \label{eqAnalScharz}
	f_1(\zeta)=\0{\zeta}f_2(\zeta)\afterline{on}\Gamma.
\end{equation}
As above, if $f_2(\zeta_0)\neq0$, the function $f=f_1/f_2$ is a Schwarz function around $\zeta_0\in \Gamma$ and no issues arise. However, if $f_2(\zeta_0)=0$, the situation is very complicated in general.

We start with a lemma analogous to Lemma~\ref{translate_to_zero}:
\begin{lem} \label{move_to_zero}
	Assume that $f_1,f_2:\Omega\to{\C}$ are holomorphic on $\Omega\subset D(\zeta_0,r),$ continuous on $\Omega\cup \Gamma,$ and that they satisfy
	\[f_1(\zeta)=\0{\zeta}f_2(\zeta)\afterline{on}\Gamma.\]
	Then, there exist functions $(f_1)_t$ and $(f_2)_t$ holomorphic on $\Omega-\zeta_0,$ continuous on $(\Omega-\zeta_0)\cup(\Gamma-\zeta_0)$ and such that
	\[(f_1)_t(\zeta)=\0{\zeta}(f_2)_t(\zeta)\afterline{on}\Gamma-\zeta_0.\]
	If additionally $f_2(\zeta_0)=0,$ then $(f_2)_t(0)=0$.
\end{lem}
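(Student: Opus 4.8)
The plan is to define the translated functions by the naive substitution $(f_i)_t(z) := f_i(z+\zeta_0)$ for $i=1,2$, and then check that these satisfy the displayed identity after absorbing the shift into one of the factors. First I would set $g_i(z) := f_i(z+\zeta_0)$, which is holomorphic on $\Omega-\zeta_0$ and continuous on $(\Omega-\zeta_0)\cup(\Gamma-\zeta_0)$ by construction. For $\zeta\in\Gamma$, writing $\zeta = z+\zeta_0$ with $z\in\Gamma-\zeta_0$, the hypothesis gives $g_1(z) = f_1(\zeta) = \0{\zeta}f_2(\zeta) = \0{z+\zeta_0}\,g_2(z) = \0{z}\,g_2(z) + \0{\zeta_0}\,g_2(z)$ on $\Gamma-\zeta_0$. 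The term $\0{\zeta_0}g_2(z)$ is holomorphic, so moving it to the left side suggests setting $(f_1)_t(z) := g_1(z) - \0{\zeta_0}\,g_2(z)$ and $(f_2)_t(z) := g_2(z)$, which then satisfy $(f_1)_t(\zeta) = \0{\zeta}\,(f_2)_t(\zeta)$ on $\Gamma-\zeta_0$ exactly as required; both are holomorphic on $\Omega-\zeta_0$ and continuous up to $(\Gamma-\zeta_0)$ since $g_1, g_2$ are.

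For the final assertion: if $f_2(\zeta_0)=0$, then $(f_2)_t(0) = g_2(0) = f_2(\zeta_0) = 0$, which is immediate.

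There is essentially no obstacle here — this is a bookkeeping lemma parallel to Lemma \ref{translate_to_zero}, and the only thing to be careful about is which combination of $g_1,g_2$ plays the role of $(f_1)_t$; one must subtract off the holomorphic piece $\0{\zeta_0}g_2$ rather than leave it, since $\0{\zeta_0}g_2$ is not of the form $\0{z}\cdot(\text{holomorphic})$ on $\Gamma-\zeta_0$. The choice above is the clean one, and it has the bonus that the $f_2$-factor is literally just the translate, so the vanishing statement at $0$ is transparent. I would present the argument in two or three lines.
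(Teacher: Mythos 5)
Your proposal is correct and coincides with the paper's own proof: the paper likewise sets $(f_1)_t(z)=f_1(z+\zeta_0)-\0{\zeta_0}f_2(z+\zeta_0)$ and $(f_2)_t(z)=f_2(z+\zeta_0)$, and verifies the identity on $\Gamma-\zeta_0$ by the same one-line computation. Nothing is missing.
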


\begin{proof}
	Define $(f_1)_t$ by
	\[(f_1)_t(z)=f_1(z+\zeta_0)-\0{\zeta_0}f_2(z+\zeta_0).\]
	Then for $\zeta\in \Gamma-\zeta_0$ we have
	\begin{align*}
		(f_1)_t(\zeta) & =f_1(\zeta+\zeta_0)-\0{\zeta_0}f_2(\zeta+\zeta_0)\\
		& =\0{\zeta+\zeta_0}f_2(\zeta+\zeta_0)-\0{\zeta_0}f_2(\zeta+\zeta_0)\\
		& =\0{\zeta}f_2(\zeta+\zeta_0)
	\end{align*}
	Setting $(f_2)_t(z)=f_2(z+\zeta_0)$, we have the desired identity.
	
	Clearly, $(f_1)_t(0)=0$ and also if $f_2(\zeta_0)=0$, $(f_2)_t(0)=0$.
\end{proof}
\noindent Abusing the notation, we denote these new functions again by $f_1$ and $f_2$.

\smallskip

It remains to show a result analogous to Corollary~\ref{polynomial_Schwarz} with $p$ replaced by $f_2$. In particular, we would like to show that the function $f=f_1/f_2$ is holomorphic on $\Omega$, continuous on $\Gamma$, and that it satisfies
\[f(\zeta)=\frac{f_1(\zeta)}{f_2(\zeta)}=\0{\zeta}\afterline{for all}\zeta\in \Gamma.\]
However, the limit of $f_1(z)/f_2(z)$ as $\Omega\ni z\to 0$ may even fail to exist when $f_2(0)=0$, and we cannot apply the Phragm{\'e}n-Lindel{\"o}f principle here. We will need to see this problem from a different scope.

\section{Nevanlinna domains and inner functions} \label{sec_model_spaces} 

We recall that a bounded simply connected domain $\Omega$ is called a Nevanlinna domain if there exist bounded holomorphic functions $f_1$, $f_2$ in $\Omega$ such that 
\[\overline{\varphi(z)} = \frac{f_1(\varphi(z))}{f_2(\varphi(z))}\]
for almost every $z\in \T=\{z: |z|=1\}$, where $\varphi$ is a conformal mapping of the unit disk onto $\Omega$. Note that this definition does not imply any additional regularity (for instance, continuity) of the functions $f_1,f_2$ on $\bd\Omega$.

We will restrict the above situation, and suppose there are holomorphic functions $f_1,f_2:\Omega\to{\C}$ continuous up to the boundary that satisfy
\begin{equation} \label{eq_small_letters}
	f_1(\zeta)=\0{\zeta}f_2(\zeta)\afterline{for}\zeta\in \Gamma.
\end{equation}

In order to better understand the situation, we rewrite \eqref{eq_small_letters} as
\begin{equation} \tag{3.1'}
\label{eq_small_letters_fraction}
	\frac{f_1(\zeta)}{f_2(\zeta)}=\0{\zeta},
\end{equation}
which is now fulfilled almost everywhere on $\Gamma$ except for the closed set ${\Gamma\cap f_2^{-1}\{0\}}$, which has zero measure. Then, $\Omega$ is what we call a \emph{strong Nevanlinna domain} and if such $f_1$ and $f_2$ exist, the ratio $f_1/f_2$ is unique thanks to the Lusin-Privalov uniqueness theorem.

Let $\phi:\D\to \Omega$ be a conformal map and consider the functions $F_1=f_1\comp \phi$ and $F_2=f_2\comp \phi$. Formulas \eqref{eq_small_letters} and \eqref{eq_small_letters_fraction} transform respectively to
\begin{equation} \label{eq_big_letters}
	F_1(\zeta)=\0{\phi(\zeta)}F_2(\zeta)
\end{equation}
and
\begin{equation} \tag{3.2'} 
\label{eq_big_letters_fraction}
	\frac{F_1(\zeta)}{F_2(\zeta)}=\0{\phi(\zeta)}
\end{equation}
both of which hold true in the sense of angular boundary values almost everywhere on~$\T$, because~$\phi$ may fail to extend ``nicely'' to~$\clos \D$. By the factorization theorem, 
we can write $F_1$ and $F_2$ in $\D$ as
\begin{equation} \label{eq_big_letters_factorized}
	F_1=\theta_1\cF_1\inline{and}F_2=\theta_2\cF_2
\end{equation}
where the $\cF_i$ are the outer factors of $F_i$ and the~$\theta_i$ are their inner factors. Since $F_1,F_2\in H^\infty$, also $\cF_1,\cF_2\in H^\infty$, and from \eqref{eq_big_letters_fraction} we get
\begin{equation} \label{eq_big_letters_fraction_factorized}
	\frac{\theta_1(\zeta)}{\theta_2(\zeta)}\frac{\cF_1(\zeta)}{\cF_2(\zeta)}=\0{\phi(\zeta)},
\end{equation}
almost everywhere on $\T$ in the sense of angular boundary values. We distinguish between two cases: either $\theta_2$ divides $\theta_1$, that is, $\theta_1/\theta_2\in H^\infty$, or it does not.

\subsection{\texorpdfstring{$\theta_2\mid \theta_1$}{?2 divides ?1}.} 
Let $h=\theta_1/\theta_2\in H^\infty$. Then, the function $(h\cF_1)/\cF_2$ belongs to the class $N^+$, defined as
\[N^+=\bigg\{\frac{f}{g}\such f,g\in H^\infty,\ g\text{ is an outer function}\bigg\},\]
and its (angular) boundary values are equal almost everywhere on $\T$ to the (angular) boundary values of $\0{\phi}$. However, since $\Omega$ is bounded, we see that $\phi\in L^\infty(\T,m)$ where $m$ is the normalized Lebesgue measure on $\T$. Smirnov's Theorem 
tells us that in fact $(h\cF_1)/\cF_2\in H^\infty$. Therefore, we have a bounded holomorphic function on the disk that is equal to $\0{\phi}$ almost everywhere on $\T$. This is impossible whenever $\phi$ is a bounded holomorphic function on $\D$. 

We are necessarily left with the other case.

\subsection{\texorpdfstring{$\theta_2\nmid \theta_1$}{?2 doesn't divide ?1}.}
We begin with some notation and definition which will be important for the rest of this text.

Let $\D_e=\wvv{\C}\setminus\clos\D$. For any function $h:\D\to{\C}$ we define $\widetilde h$ as
\[\widetilde h(z)=\0{h(1/\0 z)}.\]
The notation $\widetilde H$ will stand for a function $\widetilde H:\D_e\to{\C}$ and we will write $H$ instead of $\widetilde {\widetilde H}$ for the function $\0{\widetilde H(1/\0 z)}$. Observe that $h\in H^\infty$ if and only if $\widetilde h\in H^\infty(\D_e)$, and $h(0)=0$ if and only if $\widetilde h(\infty)=0$.

We will also consider the backward shift operator, $\cB: H^p\to H^p$, for $p\in[1,\infty)$, that is
\[\cB: f\mapsto\frac{f(z)-f(0)}{z}.\]

\begin{defin}
	Let $f$ be a meromorphic function on $\D$. We say that~$f$ admits \emph{pseudo-continuation} (across $\T$) if there exists another meromorphic function $g$ on $\D_e$ such that $f=g$ almost everywhere (on $\T$) in the sense of non-tangential limits.
	
	The pseudo-continuation of $f$ is called \emph{of bounded type} or a \emph{Nevanlinna-type pseudo-continua-tion} if $g$ is of the form $g=h_1/h_2$ for some $h_1,h_2\in H^\infty(\D_e)$.
\end{defin}

\begin{defin}
	A function $f\in H^p$ is called a \emph{cyclic vector for $\cB$}, or simply \emph{cyclic for $\cB$} if the set $\{\cB^nf\}_{n=0}^\infty$ spans the space $H^p$.
\end{defin}

The following important result is due to Douglas, Shapiro, and Shields.

\begin{theorem} \label{non_cyclic_functions}
	Consider $1\leq p<\infty$. A function $f\in H^p$ is not cyclic for $\cB$ if and only if $f$ has a pseudo-continuation of bounded type.
\end{theorem}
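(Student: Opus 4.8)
\emph{Strategy.} The proof will go through Banach--space duality, exploiting that under the natural pairing $\cB$ is the adjoint of the forward shift $S\colon g\mapsto zg$. Let $q$ be the exponent conjugate to $p$ (for $p=1$ read $(H^{1})^{*}=\mathrm{BMOA}$, and recall $\mathrm{BMOA}\subset\bigcap_{s<\infty}H^{s}$, which is all we shall need). Equip $H^{p}$ and its dual with the bilinear pairing $\langle F,G\rangle=\sum_{n\ge0}\widehat F(n)\widehat G(n)$, $\widehat F(n)$ the Taylor coefficients; this is the standard realization of $(H^{p})^{*}$. Comparing coefficients gives $\langle\cB F,G\rangle=\langle F,zG\rangle$, so $\cB^{*}=S$. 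By Hahn--Banach, $f$ is \emph{not} cyclic for $\cB$ if and only if the closed span of $\{\cB^{n}f\}$ is proper, i.e.\ there is a nonzero $g$ in the dual with $\langle f,z^{n}g\rangle=0$ for all $n\ge0$. I would then convert this to an identity on $\T$: writing $G(\zeta)=g(1/\zeta)$ for the boundary function of $g(1/w)\in H^{q}(\D_e)$ (or $H^{2}(\D_e)$ when $p=1$), a direct Fourier computation yields $\langle f,z^{n}g\rangle=\widehat{fG}(n)$ with $fG\in L^{1}(\T)$ by H\"older; hence non-cyclicity of $f$ means $\widehat{fG}(n)=0$ for all $n\ge0$, equivalently $f(\zeta)\,G(\zeta)=\overline{\zeta\,\psi(\zeta)}$ a.e.\ on $\T$ for some $\psi\in H^{1}$.

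\emph{Non-cyclic $\Rightarrow$ bounded-type pseudo-continuation.} From $fG=\overline{z\psi}$ a.e.\ on $\T$ one gets $f=\widetilde{z\psi}\,/\,g_{e}$ a.e.\ on $\T$, where $g_{e}(w)=g(1/w)\in H^{q}(\D_e)$ is not identically $0$ and $\widetilde{z\psi}\in H^{1}(\D_e)$ (vanishing at $\infty$, since $z\psi\in H^{1}$ with $(z\psi)(0)=0$), the tilde being the reflection from the excerpt. Both factors lie in the Nevanlinna class of $\D_e$, so each is a quotient of two $H^{\infty}(\D_e)$ functions: for $h\in H^{s}(\D_e)$ take the outer $b$ with $|b|=\min\{1,1/|h|\}$ on $\T$, whence $h=(bh)/b$ with $bh,b\in H^{\infty}(\D_e)$. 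Consequently the meromorphic $\Psi:=\widetilde{z\psi}/g_{e}$ on $\D_e$ has the form $h_{1}/h_{2}$ with $h_{1},h_{2}\in H^{\infty}(\D_e)$. Since $g_{e}\not\equiv0$ its boundary values are nonzero a.e., so $\Psi$ has non-tangential boundary values a.e.\ on $\T$, equal a.e.\ to $\overline{z\psi}/G=fG/G=f$. Thus $f$ admits a Nevanlinna-type pseudo-continuation.

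\emph{Bounded-type pseudo-continuation $\Rightarrow$ non-cyclic.} Conversely, suppose $f=h_{1}/h_{2}$ a.e.\ on $\T$ with $h_{1},h_{2}\in H^{\infty}(\D_e)$ and $h_{2}\not\equiv0$. Reflect to the disk: set $H_{i}(z)=\overline{h_{i}(1/\bar z)}\in H^{\infty}(\D)$, so $h_{i}=\overline{H_{i}}$ on $\T$ and $H_{2}\not\equiv0$; then $f\overline{H_{2}}=\overline{H_{1}}$ a.e.\ on $\T$, a function supported on nonpositive Fourier frequencies. Put $g(z)=z\,\overline{H_{2}(\bar z)}\in H^{\infty}\subset H^{q}$, which is nonzero because $H_{2}\not\equiv0$. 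Then for every $n\ge0$,
\[
\langle\cB^{n}f,g\rangle=\langle f,z^{n}g\rangle=\langle f,z^{\,n+1}\overline{H_{2}(\bar z)}\rangle=\widehat{f\overline{H_{2}}}(n+1)=\widehat{\overline{H_{1}}}(n+1)=0 ,
\]
so the nonzero functional $\langle\,\cdot\,,g\rangle$ annihilates the orbit $\{\cB^{n}f\}_{n\ge0}$, and $f$ is not cyclic for $\cB$.

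\emph{Main obstacle.} There is no single deep step; the difficulty is bookkeeping. One must keep straight the three involutions in play --- $z\mapsto1/\bar z$, complex conjugation on $\T$, and the excerpt's $\widetilde{\ \cdot\ }$ --- so that every Fourier-support count is correct, and verify carefully that $\cB^{*}=S$ for the chosen pairing. The analytic inputs are exactly two classical facts about $\D_e$: a nonzero $H^{s}(\D_e)$ function has a.e.\ nonzero boundary values (so that boundary values commute with forming $\Psi$), and it is a quotient of two bounded holomorphic functions (so that $\Psi$ is genuinely of bounded type). Finally, $p$ enters only through the dual space, and for $p=1$ one simply replaces $H^{\infty}$ by $\mathrm{BMOA}$ throughout; since $\mathrm{BMOA}\subset H^{1}$ this changes nothing in the argument above.
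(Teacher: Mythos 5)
The paper does not prove this statement: it is quoted verbatim as a classical theorem of Douglas, Shapiro and Shields, so there is no ``paper proof'' to compare against. Your argument is, in substance, the standard duality proof from their 1970 paper, and for $1<p<\infty$ it is correct: the pairing $\langle F,G\rangle=\sum\widehat F(n)\widehat G(n)$ does realize $(H^p)^*=H^q$ with $\cB^*=S$, the Hahn--Banach step correctly converts non-cyclicity into $\widehat{fG}(n)=0$ for $n\ge0$, the passage to $fG=\overline{z\psi}$ with $\psi\in H^1$ is the right use of the ``negative spectrum'' characterization, and the outer-function trick legitimately exhibits $\widetilde{z\psi}/g_e$ as a quotient of $H^\infty(\D_e)$ functions. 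The converse direction is also correct, and your Fourier-support bookkeeping (the various reflections) checks out.

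The one genuine soft spot is $p=1$. There your claim ``$fG\in L^1(\T)$ by H\"older'' fails: $f\in H^1$ and $G\in\bigcap_{s<\infty}L^s$ do not give an integrable product, so $\widehat{fG}(n)$ is not literally defined and the identity $\langle f,z^ng\rangle=\widehat{fG}(n)$ needs the Abel/limit definition of the $H^1$--$\mathrm{BMOA}$ pairing (e.g.\ pair with $f_r$ and let $r\to1$, or factor $f=f_1f_2$ with $f_i\in H^2$) before you can conclude that the ``nonnegative frequencies'' of the formal product vanish. This is repairable but is not ``nothing changes''; you should either supply that limiting argument or state the theorem for $1<p<\infty$, which is all the paper uses (it applies the result only to $\phi\in H^\infty\subset H^2$).
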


In the case when $p=2$, it is known that any non-cyclic function of $\cB$ belongs to a proper $\cB$-invariant subspace.
As a consequence of Beurling's theorem, these spaces are of the form $(\theta H^2)^\perp$ and are known as \emph{model spaces} and denoted by~$K_\theta$. Here we will need the fact that
\[K_\theta=(\theta H^2)^\perp=H^2(\T)\cap \theta\0{H^2_0(\T)},\]
where in the last identity we mean the boundary values of the corresponding functions and where $H^2_0=\{f\in H^2\such f(0)=0\}$.

Now, we can proceed with the case when $\theta_2\nmid \theta_1$:

After dividing both $\theta_1$ and $\theta_2$ by their greatest common divisor, we may assume that $\theta_1$ and $\theta_2$ have no common zeros and that the Borel supports of their singular measures are disjoint. Much as above, we see that the function $F=(\theta_1\cF_1)/\cF_2=F_1/\cF_2$ belongs the class $N^+$ and thus $F\in H^\infty$, because $\theta_2\0{\phi}\in L^\infty(\T,m)$. Then the following is true in the sense of angular boundary values for almost every $\zeta\in\T$:
\begin{align}
	&	\0{\phi(\zeta)}=\frac{\theta_1(\zeta)\cF_1(\zeta)}{\theta_2(\zeta)\cF_2(\zeta)}=\frac{F(\zeta)}{\theta_2(\zeta)}\notag\\
	\iff & \phi(\zeta)=\theta_2(\zeta)\0 F(\zeta) \label{eq-is-in-model-space}\\
	\iff & \phi(\zeta)=\frac{\widetilde F(\zeta)}{\widetilde {\theta}_2(\zeta)}. \label{eq_found_theta}
\end{align}
Since $\widetilde F,\widetilde {\theta}_2\in H^\infty(\D_e)$, we see that $\phi\in H^\infty\subset H^2$ admits pseudo-continuation across $\T$ of bounded type, and Theorem~\ref{non_cyclic_functions} shows that $\phi$ is not cyclic for $\cB$. So, it has to belong to some model space $K_\theta$. See \cite[Theorem 1]{Fed2006} for more details. In fact, from~\eqref{eq-is-in-model-space} and because we ``need'' to have $F(0)=0$, it follows that either
\[\phi\in K_{\theta_2} \text{ if } \theta_1(0)=0,\inline{or}\phi\in K_{z\theta_2} \text{ if } \theta_1(0)\neq0.\]

\section{Boundary behaviour of conformal maps in \texorpdfstring{$K_\theta$}{model spaces}} \label{sec_construction} 

In this section we show that Theorem~\ref{SakTheorem} fails when condition \eqref{Schwarz_condition} is replaced by \eqref{eq_small_letters}. To this end, we will find a simply connected domain $\Omega$ and a conformal map $\phi: \D\to \Omega$ continuous up the boundary that has a pseudo-continuation of bounded type and is smooth but not real analytic on $\T$. The functions participating in this pseudo-continuation will also be continuous on the boundary. First, we go one step back and work with Nevanlinna domains. Thanks to \cite[Theorem 1]{Fed2006} by Fedorovskiy, this is equivalent to studying the model subspaces, $K_\theta$, for different inner functions $\theta$. 

If $\theta(z_0)=0$ for some $z_0\in\D$, the function
\[\phi(z)=\frac{1}{1-\0{z_0}z}\in K_\theta\cap C^\infty(\T)\]
has bounded type pseudo-continuation across $\T$ and thus $\phi(\D)$ is a Nevanlinna domain. In fact, $\phi$ can be analytically extended on the whole closed disk, $\clos{\D}$, and~$\phi(\T)$ is real analytic. On the other hand, in a series of papers, \cite{Maz1997,CarParFed2002,Fed2006,Maz2016,Maz2018,BelBorFed2019}, it has been shown that the boundary of a Nevanlinna domain can be ``arbitrarily bad''. In particular, it can be nowhere analytic \cite{Maz1997}, of class $C^1$ but not in any $C^{1,\alpha}$ for no $\alpha>0$ \cite{Fed2006}, or even non-rectifiable \cite{Maz2016}. We refer also to the Belov-Fedorovskiy paper \cite{BelFed2018}, where the description is given of model spaces that contain bounded univalent functions. We mention that the Hausdorff dimension of the accessible boundary of a Nevanlinna domain can be any number between $1$ and $2$ as shown in \cite{BelBorFed2019}, another construction can be found in \cite{Maz2018}.

\smallskip

However, in all the above work the inner function $\theta$ is a Blaschke product or has a Blaschke part. Moreover, in order to compare with Sakai's theorem, we have to consider the case where the functions $\widetilde F_1,\widetilde F_2\in H^\infty(\D_e)$ for which $\phi=\widetilde F_1/\widetilde F_2$ on $\T$ are continuous up to $\T$. This is not always possible when $\theta$ is not purely singular (see \cite[Example 5.8]{CarParFed2002}).

Therefore, in this section $\theta$ will be a singular inner function of the form
\[\theta(z)=\exp\bigg(-\int\limits_\T\frac{\zeta+z}{\zeta-z}d\mu_\theta(\zeta)\bigg)\]
with $\mu_\theta$ supported on a Carleson set, $E\subset\T$. We will show that there is a conformal map $\phi\in K_\theta$ continuous on $\clos{\D}$ which is in $C^\infty(\T)$ but not real analytic on $\T$.

In view of \cite[Theorem 2.1]{DyaKha2006}, since $\supp(\mu_\theta)$ is Carleson, the space $K_\theta$ then contains a non-trivial function from some smoothness class, for example a function $g\in H^\infty\cap C^\infty(\T)$ (or in a Bergman space, i.e., $g\in A^{p,1}$ for some $p>1$). Since $g\in K_\theta$, it admits a bounded type pseudo-continuation of the form
\[g=\widetilde G/\widetilde {\theta}\afterline{almost everywhere on}\T,\]
where $\widetilde G\in H^\infty(\D_e)$ vanishes at infinity (see \cite[Theorem 5.1.4]{CimRos2000}). Additionally,~$g$ has an analytic continuation, say $\cG$, to $\wvv{\C}\setminus\supp(\mu)$. Of course, $\cG=\widetilde G/\widetilde {\theta}$ on $\D_e$ and observe that $\cG$ cannot be bounded in $\D_e$; otherwise $g$ would be constant, as $\cG\2\D=g$ and $\cG\2{\D_e}$ coincide almost everywhere on $\T$.

Now, consider $\alpha\in\D_e$ with $\theta(1/\0{\alpha})\neq0$ and the following aggregate:
\[\phi(z)=\frac{\cG(z)-\cG(\alpha)}{z-\alpha}.\]
We will show that $\phi\in K_\theta\cap C^\infty(\T)$ and $\phi$ is conformal in $\clos\D$.

Clearly, $\phi$ is inside $H^2(\D)$ and also
\[\0{\theta(\zeta)}\phi(\zeta)=\frac{\0{\theta(\zeta)}g(\zeta)-\0{\theta(\zeta)}\cG(\alpha)}{\zeta-\alpha}=\frac{\widetilde G(\zeta)-\widetilde {\theta}(\zeta)\cG(\alpha)}{\zeta-\alpha}.\]
For $z\in\D_e$ the function
\[\frac{\widetilde G(z)-\widetilde {\theta}(z)\cG(\alpha)}{z-\alpha}=\frac{1}{z-\alpha}\bigg(\widetilde G(z)-\frac{\widetilde {\theta}(z)}{\widetilde {\theta}(\alpha)}\widetilde G(\alpha)\bigg)\]
is analytic around $\alpha$ and vanishes at infinity. Hence, $\phi\in K_\theta$.

Furthermore, $\phi$ is univalent in $\clos\D$. Indeed, suppose it is not. Then, there exist $z,w\in\clos\D$ with $z\neq w$ and $\phi(z)=\phi(w)$ or equivalently
\begin{align*}
	& \frac{g(z)-\cG(\alpha)}{z-\alpha}=\frac{g(w)-\cG(\alpha)}{w-\alpha}\\
	\iff & \frac{g(z)}{z-\alpha}-\frac{g(w)}{w-\alpha}=\frac{\cG(\alpha)}{z-\alpha}-\frac{\cG(\alpha)}{w-\alpha}=\cG(\alpha)\frac{z-w}{(z-\alpha)(w-\alpha)}\\
	\iff & -\alpha\frac{g(z)-g(w)}{z-w}+w\frac{g(z)-g(w)}{z-w}-g(w)=\cG(\alpha).
\end{align*}
The left-hand side is bounded, because $g\in C^\infty(\clos\D)$, whereas we can pick ${1<\alpha<2}$ so that $|\cG(\alpha)|$ is arbitrarily large (recall $\cG\2{\D_e}$ is not bounded), a contradiction, and therefore $\phi$ is univalent in $\clos\D$.

Consequently, if $g\in K_\theta\cap C^\infty(\T)$ and $\theta$ is a singular inner function, then~$\phi$ is univalent in $\clos\D$ and $\phi\in K_\theta\cap C^\infty(\T)$. Also see \cite[Section 4]{BarFed2011} for more details. At the same time, note that $\cG$ cannot be analytically extended to the whole~$\clos\D$, because it is unbounded near the unit circle, and thus neither can $\phi$; this fails exactly on the Carleson set $E$.

\smallskip

Now, since $\phi\in K_\theta$, we can write 
\begin{equation} \label{eq_conformal_in_K_theta}
	\phi=\theta\0 F\iff \theta\0{\phi}=F
\end{equation}
almost everywhere on $\T$ for some function $F\in H^2$ with $F(0)=0$. In fact, $F\in H^\infty$ because $\phi\in C^\infty(\clos\D)$.

It is known that there exists some analytic function, $\cH$, with $\cH\2 E=0$ such that both $\cH$ and $\cH \theta$ are Lipschitz on $\clos\D$. In fact, we can further consider $\cH$ to be an outer function in $C^\infty(\T)$.
Multiplying by $\cH$ in \eqref{eq_conformal_in_K_theta}, we get
\begin{equation} \label{multi_by_shirokov}
	(\cH \theta)\0{\phi}=\cH F
\end{equation}
almost everywhere on $\T$. In particular, the left-hand side is now smooth on the whole $\T$ and the same therefore holds true for the right-hand side. In a sense,~$\cH$ ``annihilates'' the singularities of $\theta$ as \eqref{eq_conformal_in_K_theta} fails exactly on the support,~$E$, of~$\mu_\theta$.

At this point, set $F_1=\cH F$, $F_2=\cH \theta$, and $f_j=F_j\comp \phi^{-1}$ for $j=1,2$. Then,~\eqref{multi_by_shirokov} becomes
\[F_1=\0{\phi}F_2,\]
which now is fulfilled on the whole boundary $\T$, and in turn
\[f_1(\zeta)=\0{\zeta}f_2(\zeta)\inline{for all}\zeta\in \Gamma.\]
This is exactly the setup we were looking for, albeit it contrasts with Sakai's result: Even though $\Gamma=\phi(\T)$ is $C^\infty$-smooth, $\phi$ cannot be analytic on the Carleson set $E$ and thus neither can $\Gamma$.

\smallskip

It is worth mentioning that there are examples of Nevanlinna domains that come from singular inner functions with particularly irregular boundaries. Namely, in \cite{BelBorFed2019} one can find examples of univalent functions in a Paley-Wiener space such that they map the upper half-plane onto a Nevanlinna domain whose boundary can have any dimension between $1$ and $2$.

\section{Holomorphic functions in \texorpdfstring{${\C}^2$}{C2}} \label{sec_phi} 

In this section we attempt to replace the function $\0{\zeta}f_0(\zeta)$ in \eqref{eq_small_letters} with a more general formula.

For some positive $r>0$, let $\Omega\subset D(\zeta_0,r)$ be a simply connected open set, let $\Gamma=\bd \Omega\cap D(\zeta_0,r)$, and let $\zeta_0\in \Gamma$. Here, we will also need the extra assumption that $\Gamma$ is a Jordan arc (or possibly a union of Jordan arcs).

Let $\Phi$ be a holomorphic function of two variables, that is, a function of the form
\[\Phi(z,w)=\sum_{n,m=0}^{+\infty}b_{nm}z^nw^m\]
where each of the functions $\Phi(z,\var)$ and $\Phi(\var,w)$ is itself holomorphic. Suppose there exists a function $R$ which is
\begin{enumerate}[(i)]
	\item holomorphic on $\Omega$,
	\item continuous on $\clos{\Omega}$, and \label{continuity_of_R}
	\item satisfies $R(\zeta)=\Phi(\zeta,\0{\zeta})$ on $\Gamma$. \label{eq_Sakai_doubly}
\end{enumerate}
In view of Lemma~\ref{move_to_zero}, we may assume that $\zeta_0=0$ and $b_{00}=0$ so that $R(0)=\Phi(0,0)=0$. Notice that $R(z)$ and $\Phi(z,\0 z)$ are bounded on $\clos{\Omega}$ and thanks to the Phragm\'en-Lindel{\"o}f Principle~\ref{Fuchs}, we may assume without loss of generality that there exists some non-negative integer $k$ for which
\begin{equation}\label{eq_weier_conditions}
	\Phi(0,0)=\parder{w}\Phi(0,0)=\dots=\parder{w}[k-1]\Phi(0,0)=0\inline{and}\parder{w}[k]\Phi(0,0)\neq0
\end{equation}
otherwise $\Phi$ would be identically zero.

We would like to use the Weierstra{\ss} approximation theorem for the function $\Phi(z,w)-R(z)$ around $0$, but $R$ is not holomorphic on the boundary. But since it is continuous by \eqref{continuity_of_R} and $\Gamma$ is Jordan, we can use Mergelyan's theorem to get a sequence of polynomials $p_n$ that converge to $R$ uniformly on $\clos{\Omega}$. And we can pick this sequence so that $p_n(0)=0$ for every $n=0,1,\dots$\,.

Next, we define the functions
\[\Psi(z,w)=\Phi(z,w)-R(z)\inline{and}\Psi_n(z,w)=\Phi(z,w)-p_n(z).\]
The $\Psi_n$ are holomorphic on ${\C}^2$ and converge uniformly to $\Psi$ on $\clos{\Omega}\xx{\C}$. Observe that for all $n$ we have $\Psi_n(0,0)=\Phi(0,0)-p_n(0)=0$ and also 
\[\parder{w}[\kappa]\Psi_n=\parder{w}[\kappa]\Phi\afterline{for all integers}\kappa\geq1\]
and all points $(z,w)$. Then, from \eqref{eq_weier_conditions} and from the Weierstra{\ss} approximation theorem, there exist unique holomorphic functions $a_{0;n},\dots,a_{k-1;n}:{\C}\to{\C}$ and $c_n:{\C}^2\to{\C}$ with $a_{j;n}(0)=0$ and $c_n(0,0)\neq0$ such that
\begin{equation*}
	\Psi_n(z,w)=c_n(z,w)\left(w^k+a_{k-1;n}(z)w^{k-1}+\dots+a_{0;n}(z)\right).
\end{equation*}

Following the proof of the Weierstra{\ss} theorem and since the convergence $\Psi_n\to \Psi$ is uniform on $\clos{\Omega}\xx{\C}$, we can find sufficiently small $\delta$ and $\rho$ with $\rho\geq \delta>0$ so that $a_{0;n},\dots,a_{k-1;n}$ and the $c_n$ converge uniformly on $\clos{\Omega}\cap D(0,\delta)$ and $\left(\clos{\Omega}\cap D(0,\delta)\right)\xx D(0,\rho)$, respectively, to some functions $a_0,\dots,a_{k-1}$ and $c$ with $a_j(0)=0$ and $c(0,0)\neq0$. Note that the functions $a_j$ are holomorphic on $\Omega\cap D(0,\delta)$ and continuous on $\clos{\Omega}\cap D(0,\delta)$. Subsequently, we get
\begin{equation} \label{eq_boundary_factorization}
	\Phi(z,w)-R(z)=c(z,w)\left(w^k+a_{k-1}(z)w^{k-1}+\dots+a_0(z)\right).
\end{equation}

Let us write
\[P(z,w)=w^k+a_{k-1}(z)w^{k-1}+\dots+a_0(z)\]
for the polynomial factor. From \eqref{eq_Sakai_doubly}, \eqref{eq_boundary_factorization} and since $c(0,0)\neq0$, we have 
\begin{equation} \label{eq_poly_zero_on_boundary}
	P(\zeta,\0{\zeta})=\0{\zeta}^k+a_{k-1}(\zeta)\0{\zeta}^{k-1}+\dots+a_0(\zeta)=0\afterline{for all}\zeta\in \Gamma\cap D(0,\delta).
\end{equation}

\begin{rem*}
	Functions of the form
	\[P(z,\0 z)=\0 z^k+a_{k-1}(z)\0 z^{k-1}+\dots+a_0(z),\]
	where $a_j$ are polynomials, are called polyanalytic polynomials. One can find more details on these in \cite{Fed1996,Maz2016} or \cite{Sie1988}.
\end{rem*}

We are interested in the roots of the polynomial $P(z,\var)$ when $z\in \clos{\Omega}\cap D(0,\delta)$. In other words, we will study the equation (in~$w$)
\begin{equation} \label{eq_zero_poly_on_boundary}
\begin{split}
	P(z,w)=0\iff w^k+a_{k-1}(z)w^{k-1}+\dots+a_0(z)=0\\
	\text{when }z\in \clos{\Omega}\cap D(0,\delta).
\end{split}
\end{equation}

Let $\cD(z)$ be the discriminant of $P(z,\var)$ (for any fixed $z$). Then, $\cD(z)$ is a polynomial of the coefficients $a_0(z),\dots,a_{k-1}(z)$ and is equal to $0$ if, and only if, $P(z,w)$ and $\parder{w}{P(z,w)}$ share a common factor. The roots of $P(z,\var)$ are given by a multivalued holomorphic function, $\cW$, depending on $a_1,\dots,a_{k-1}$, and the points where $\cW$ changes a branch inside $\Omega\cap D(0,\delta)$ are exactly the zeros of $\cD$ (in $\Omega\cap D(0,\delta)$).

\medskip

We distinguish between two cases: when $\cD$ is identically $0$ and when it is not.

Before moving on, let us note that the set $\cM(\Omega,\Gamma,\delta)$ of all meromorphic functions on $\Omega\cap D(0,\delta)$ continuous up to $(\Omega\cup \Gamma)\cap D(0,\delta)$ except possibly a (closed) measure zero subset of $\Gamma$ is a field with the usual operations of addition and multiplication.

\subsection{\texorpdfstring{$\cD\neq0$}{Non-Zero Discriminant}}

Here $P(z,w)$ is irreducible over $\cM(\Omega,\Gamma,\delta)$. Since $\cD$ is continuous on $(\Omega\cup \Gamma)\cap D(0,\delta)$, the set $\left(\cD^{-1}\{0\}\cap \Gamma\right)\cap D(0,\delta)$ is closed and of zero harmonic measure. Now, we decompose $\left(\Gamma\setminus\cD^{-1}\{0\}\right)\cap D(0,\delta)$ into countably many open connected arcs.

Let $\gamma$ be one of these arcs. Then, there exists a simply connected set $D\subset \Omega\cap D(0,\delta)$ such that $\bd D\cap\bd \Omega=\gamma$. Since $\cD$ has no zeros on $D\cup \gamma$, by the monodromy theorem the multivalued function $\cW$ ``splits'' into $k$ distinct holomorphic functions, $W_j$ ($j=1,\dots,k$), and let $C_j=\{\zeta\in \gamma\such W_j(\zeta)=\0{\zeta}\}$. Notice that the $C_j$'s are closed (in $\gamma$), they cover $\gamma$, and any two of them intersect at a (closed) set of zero harmonic measure.

Unfortunately, $C_j$ need not be connected, but we can further decompose each~$\mathring{C_j}$ (whenever it is non-empty) into countably many open arcs as in $\mathring{C_j}=\cup_i \gamma_j^i$, for $j=1,\dots,k$. Again, around each $\gamma_j^i$ we consider a neighbourhood $D_j^i\subset D$ with $\bd D_j^i\cap \bd D=\gamma_j^i$ (these can, but need not be simply connected) and let $W_j^i=W_j\2{D_j^i\cup \gamma_j^i}$.

Then, for each $j=1,\dots,k$ and $i=1,2\dots$ the functions $W_j^i$ are holomorphic on $D_j^i$, continuous on $D_j^i\cup \gamma_j^i$ and satisfy $W_j^i(\zeta)=\0{\zeta}$ for all $\zeta\in \gamma_j^i$; in other words, they are Schwarz functions on $D_j^i\cup \gamma_j^i$. Since $\Gamma$ is Jordan, all $\gamma_j^i$ are also Jordan and from Theorem~\ref{SakTheorem} we conclude that each $\gamma_j^i$ is, in fact, a regular real analytic simple arc except possibly some cusps.

\subsection{\texorpdfstring{$\cD=0$}{Zero Discriminant}}

In this case, $P(z,w)$ has to be reducible over $\cM(\Omega,\Gamma,\delta)$. In particular, we can write $P(z,w)=P_1(z,w)\cdots P_{\widetilde k}(z,w)$ for some $\widetilde k\leq k$ where each $P_\kappa(z,w)$ has now coefficients in $\cM(\Omega,\Gamma,\delta)$ and is irreducible, i.e., $\cD_\kappa\neq0$ where $\cD_\kappa$ is the discriminant of $P_\kappa(z,\var)$. Since $P(\zeta,\0{\zeta})=0$ for all $\zeta\in \Gamma$, we can split $(\Gamma\setminus E)\cap D(0,\delta)$, where $E$ is some closed zero-(harmonic)-measure set, into open sets $O_\kappa$ for $\kappa=1,\dots,\widetilde k$ so that $P_\kappa(\zeta,\0{\zeta})=0$ for all $\zeta\in O_\kappa$. Notice that $O_\kappa\cap O_{\kappa'}=\emptyset$ when $P_\kappa$ and $P_{\kappa'}$ are different.

Observe that, since $P(z,w)$ factors into the polynomials $P_\kappa(z,w)$ (over $\cM(\Omega,\Gamma,\delta)$) and the roots of $P(z,\var)$ are given by the multivalued holomorphic function $\cW$, the roots of each $P_\kappa(z,\var)$ are also given by a multivalued holomorphic function $\cW_\kappa$ whose branches are comprised of branches of $\cW$.

Working as above for each $\kappa=1,\dots,\widetilde k$, we separate $O_\kappa\setminus\cD_\kappa^{-1}\{0\}$ into countably many open arcs and for each such arc, $\gamma$, we find some simply connected neighbourhood, $D\subset \Omega$, with $\bd D\cap\bd \Omega=\gamma$ so that $\cW_\kappa$ ``splits'' into its different branches. Again following the above arguments, we can decompose $\gamma$ --- minus a zero-measure set --- into countably many open arcs over which $W_j(\zeta)=\0{\zeta}$ for some branch $W_j$ of $\cW_\kappa$. Constructing appropriate neighbourhoods, we conclude that except a zero-measure set, $\gamma$ is a countable union of regular real analytic simple arcs except possibly some cusps.

\smallskip

In either case, the cusps (if they exist) point into $\Omega$ and may only accumulate on the endpoints of each open arc.

\smallskip

Now we formulate the above results into a theorem.
\begin{theorem} \label{the_function_Phi}
	Let $\Omega$ be a bounded simply connected domain such that $\Gamma=\bd \Omega\cap D(\zeta_0, r)$ is a (union of) Jordan arc(s). Also, let $\Phi$ be a (non-trivial) holomorphic function of two variables defined in $D(\zeta_0, r)\times D(\bar\zeta_0, r),$ and suppose there exists a function $R$
	\begin{enumerate}[{\rm(i)}]
		\item holomorphic on $\Omega,$
		\item continuous on $\clos{\Omega},$ and such that
		\item $R(\zeta)=\Phi(\zeta,\0{\zeta})$ for all $\zeta\in \Gamma$.
	\end{enumerate}
	Then, there exists a closed set, $E\subset \Gamma,$ of zero harmonic measure so that $\Gamma\setminus E$ is a countable union of regular real analytic simple arcs except possibly for some cusps. The cusps (if they exist) point into $\Omega$ and may only accumulate on $E$.
\end{theorem}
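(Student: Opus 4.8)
The plan is essentially to assemble the theorem from the two case analyses (\S5.1 and \S5.2) already developed in the text, so the ``proof'' is mostly a matter of organizing those ingredients and verifying that the exceptional set can be taken closed of zero harmonic measure. First I would invoke Lemma~\ref{move_to_zero} to reduce to $\zeta_0=0$ with $b_{00}=0$, so that $R(0)=\Phi(0,0)=0$; this is harmless since translation preserves all the hypotheses and the conclusion. Next, since $R$ and $\Phi(z,\bar z)$ are bounded on $\clos\Omega$, the Phragm\'en--Lindel\"of principle (Theorem~\ref{Fuchs}) lets me assume the $w$-order condition \eqref{eq_weier_conditions}, i.e.\ that for some integer $k\ge1$ we have $\partial_w^j\Phi(0,0)=0$ for $j<k$ and $\partial_w^k\Phi(0,0)\neq0$; otherwise $\Phi$ is trivial. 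Then, using Mergelyan's theorem (applicable because $\Gamma$ is Jordan and $R$ is continuous on $\clos\Omega$) to approximate $R$ by polynomials $p_n$ with $p_n(0)=0$, together with the Weierstra\ss\ preparation theorem applied to the holomorphic-in-$\C^2$ functions $\Psi_n=\Phi-p_n$ and a passage to the limit, I obtain on some disk $D(0,\delta)$ the factorization \eqref{eq_boundary_factorization}, $\Phi(z,w)-R(z)=c(z,w)P(z,w)$ with $c(0,0)\neq0$ and $P(z,w)=w^k+a_{k-1}(z)w^{k-1}+\cdots+a_0(z)$ a Weierstra\ss\ polynomial whose coefficients $a_j$ are holomorphic on $\Omega\cap D(0,\delta)$ and continuous on $\clos\Omega\cap D(0,\delta)$ with $a_j(0)=0$. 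Since $c(0,0)\neq0$, hypothesis (iii) forces the polyanalytic identity \eqref{eq_poly_zero_on_boundary}: $P(\zeta,\bar\zeta)=0$ for all $\zeta\in\Gamma\cap D(0,\delta)$.

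With this setup in hand the argument splits on whether the discriminant $\cD(z)$ of $P(z,\var)$ vanishes identically. In the case $\cD\not\equiv0$, I argue as in \S5.1: $\cD$ is continuous on $(\Omega\cup\Gamma)\cap D(0,\delta)$, so $\{\cD=0\}\cap\Gamma\cap D(0,\delta)$ is closed and, being the boundary trace of the zero set of a nonzero holomorphic function, has zero harmonic measure. On each of the countably many complementary open arcs $\gamma$ I pick a simply connected neighbourhood $D\subset\Omega\cap D(0,\delta)$ with $\partial D\cap\partial\Omega=\gamma$; since $\cD$ is zero-free on $D\cup\gamma$, the monodromy theorem splits the multivalued root function $\cW$ into $k$ single-valued holomorphic branches $W_1,\dots,W_k$, and \eqref{eq_poly_zero_on_boundary} says $\gamma=\bigcup_j C_j$ with $C_j=\{\zeta\in\gamma: W_j(\zeta)=\bar\zeta\}$ closed in $\gamma$; distinct $C_j$'s meet in a set of zero harmonic measure (again a holomorphic zero-set trace, applied to $W_i-W_j$). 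Decomposing each $\mathring C_j$ into open arcs $\gamma_j^i$ and choosing neighbourhoods $D_j^i$, the restrictions $W_j^i$ are genuine Schwarz functions on $D_j^i\cup\gamma_j^i$ with $\gamma_j^i$ Jordan, so Theorem~\ref{SakTheorem} applies: each $\gamma_j^i$ is a regular real analytic simple arc except possibly for cusps, and the cusps point into $\Omega$ (case (2c)). In the case $\cD\equiv0$, I proceed as in \S5.2: $P$ factors over the field $\cM(\Omega,\Gamma,\delta)$ into irreducible factors $P_1\cdots P_{\widetilde k}$ with nonvanishing discriminants $\cD_\kappa$, the vanishing locus $P(\zeta,\bar\zeta)=0$ distributes over the $P_\kappa$ away from a closed zero-measure set, and the roots of each $P_\kappa(z,\var)$ are given by a multivalued holomorphic function $\cW_\kappa$ built from branches of $\cW$; applying the $\cD_\kappa\neq0$ analysis to each piece yields the same local structure.

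Finally I would collect the exceptional pieces: the union $E$ of $\{\cD=0\}\cap\Gamma$ (or the analogous union over $\kappa$), all the pairwise-intersection sets of the $C_j$'s, and the (countably many) endpoints of the arcs $\gamma$, $\gamma_j^i$. Each constituent is closed with zero harmonic measure; a countable union of such sets still has zero harmonic measure, and I would take $E$ to be its closure (which remains of zero harmonic measure since the closure adds only accumulation points, which are cusp locations and arc endpoints). Off $E$, $\Gamma$ is by construction a countable union of regular real analytic simple arcs, with cusps pointing into $\Omega$ and accumulating only on $E$. The global statement (over $D(\zeta_0,r)$ rather than just $D(0,\delta)$) follows by covering $\Gamma$ with countably many such small disks and taking the union of the local exceptional sets.

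I expect the main obstacle to be the bookkeeping around the exceptional set $E$ --- specifically, ensuring that after all the successive countable decompositions (into arcs $\gamma$, then into $C_j$, then into $\gamma_j^i$, and separately over the irreducible factors $P_\kappa$) the totality of ``bad'' points is still closed and of zero harmonic measure, and that the cusps, which Sakai's theorem permits only at isolated arc-endpoints, cannot conspire to accumulate anywhere outside $E$. A secondary technical point is justifying the uniform convergence in the Weierstra\ss\ preparation step on $\clos\Omega\cap D(0,\delta)$ (as opposed to merely on an interior polydisk) so that the limiting coefficients $a_j$ inherit continuity up to $\Gamma$; this is what makes \eqref{eq_poly_zero_on_boundary} meaningful on the boundary and is the linchpin connecting the $\C^2$-analysis to Sakai's setting.
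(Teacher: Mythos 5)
Your proposal is correct and follows essentially the same route as the paper: reduction via Lemma~\ref{move_to_zero} and Theorem~\ref{Fuchs}, Mergelyan approximation feeding into the Weierstra{\ss} preparation to obtain \eqref{eq_boundary_factorization} and \eqref{eq_poly_zero_on_boundary}, and then the same discriminant dichotomy with branch-splitting and an appeal to Theorem~\ref{SakTheorem} on each arc $\gamma_j^i$. The two technical points you flag (uniform convergence of the preparation data up to $\clos{\Omega}\cap D(0,\delta)$, and the bookkeeping making $E$ closed of zero harmonic measure) are exactly the steps the paper itself treats briefly, so your assembly matches the intended argument.
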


\section{The \texorpdfstring{$\mathcal{U}$-$\mathcal{V}$}{U-V} problem} \label{sec_u_v} 

In this section, we are interested in the following setup.

\smallskip

Let $\Omega$ be a simply connected open set in ${\C}$ and let $\zeta_0\in\bd \Omega$ be a boundary point of $\Omega$. Assume that for some $\rho>0$ the connected component, $\Gamma$, of $\bd \Omega\cap D(\zeta_0,\rho)$ containing $\zeta_0$ is a Jordan curve. Note that $\rho\geq\dist(\zeta_0,\bd \Omega\setminus \Gamma)>0$. For convenience we will write simply $\Omega$ to denote $\Omega\cap D(\zeta_0,\rho)$.

Let $A$ be an analytic function in a neighbourhood, $D(\zeta_0,\epsilon)$, of $\zeta_0$ and suppose we have two functions $\mathcal{U}$ and $\mathcal{V}$ defined on $\Omega$ that are not proportional and have the following properties:
\begin{enumerate}[I)]
	\item $\mathcal{U}$ and $\mathcal{V}$ are positive and harmonic on $\Omega$,
	\item they are continuous on $\Omega\cup \Gamma$,
	\item $\mathcal{U}=\mathcal{V}=0$ on $\Gamma$, and
	\item $\frac{\mathcal{U}(\zeta)}{\mathcal{V}(\zeta)}=|A(\zeta)|^2\neq{\const}$ for $\zeta\in \Gamma$.
	\label{eq_u_v_boundary_values}
\end{enumerate}
Notice that since $\cU\neq c\cV$, the function $|A|$ needs to be non-constant. Otherwise, we could have $\cU=c\cV$ and all our conditions work trivially for any $\Gamma$. Also, we may assume that $\rho<\epsilon$ without loss of generality (so that $A$ is defined over the whole $\Omega$) to avoid unnecessary technical difficulties.

Formula \eqref{eq_u_v_boundary_values} is to be understood in the sense of limits, i.e., the limit of $\mathcal{U}(z)/\mathcal{V}(z)$ as $\Omega\ni z\to \zeta\in \Gamma$ exists and is equal to $|A(\zeta)|^2$. In fact, this limit always exists when $\Omega$ is simply connected and $\Gamma$ is Jordan (see Remark \ref{u_v_Harnack}), so the only assumption here is the values it takes.

Consider a conformal map from the Poincar\'e plane to $\Omega$, $\phi: \H\to \Omega$. Since~$\Gamma$ is connected and Jordan, Carath{\'e}odory's theorem implies that $\phi$ extends conformally to a function (abusing the notation) $\phi: \H\cup \gamma\to \Omega\cup \Gamma$ which we can pick so that $\gamma\subset{\R}$ is some bounded open interval with $\phi(\gamma)=\Gamma$ and $\phi(0)=\zeta_0$. Utilizing this $\phi$, we can ``transfer'' the information about $\mathcal{U}$ and $\mathcal{V}$ over $\Omega$ to information over $\H$. Define
\[u\equiv\mathcal{U}\comp \phi,\quad v\equiv\mathcal{V}\comp \phi\inline{and}a\equiv A\comp \phi\]
and note that $a$ is analytic on $\H$ and continuous on $\H\cup \gamma$. As above, we have
\begin{enumerate}[i)]
	\item $u$ and $v$ are positive and harmonic on $\H$,
	\item they are continuous on $\H\cup \gamma$,
	\item $u=v=0$ on $\gamma$, and
	\item $\frac{u}{v}=|a|^2$ on $\gamma$. \label{real_analytic_h}
\end{enumerate}
Again, \eqref{real_analytic_h} is to be understood in the sense of limits.

Now, harmonically extend $u$ and $v$ to $\H\cup \gamma\cup\H^-$ by
\[u^*(z)=\begin{cases}
	u(z), & z\in\H\\
	0, & z\in \gamma\\
	-{u(\0 z)}, & z\in\H^-
\end{cases}
\inline{and}
v^*(z)=\begin{cases}
	v(z), & z\in\H\\
	0, & z\in \gamma\\
	-{v(\0 z)}, & z\in\H^-
\end{cases}\]
and let $h$ be the function
\[h(z)=\begin{cases}
	\frac{u^*(z)}{v^*(z)}, & z\in\H\cup\H^-,\\
	\frac{u_y^*(z)}{v_y^*(z)}, & z\in \gamma.
\end{cases}\]

We claim that $h$ is well defined and, in fact, real analytic on $\H\cup \gamma\cup\H^-$. Indeed, using Harnack's inequality, for any $(x,0)\in \gamma$ there exists a constant $c>0$ (dependent on $v^*$) such that
\begin{align} \label{eq_Harnack}
	c\frac{y}{2-y}\leq v^*(x,y)\leq c\frac{2-y}{y} & \quad\text{for every}\ 0<y<1,\ \text{or}\notag\\
	c\frac{1}{2-y}\leq\frac{v^*(x,y)}{y}\leq c\frac{2-y}{y^2}. &
\end{align}
Recall that $v^*(x,0)=0$ and take limits as $y\to 0^+$. Since $v^*$ is harmonic on $\H\cup \gamma\cup\H^-$, \eqref{eq_Harnack} guarantees that $v^*_y>0$ on $\gamma$ (the same holds true for $u^*$) and therefore the limit
\[\lim_{y\to 0}\frac{u^*(x,y)}{v^*(x,y)}=\frac{u^*_y(x,0)}{v^*_y(x,0)}\]
exists and is finite. Hence, $h$ is a well-defined continuous function on $\H\cup \gamma\cup\H^-$. In fact, since $u^*_y$ and $v^*_y$ are real analytic and non-zero around $\gamma$, $h$ is also real analytic on $\H\cup \gamma\cup\H^-$. What is more is that
\begin{equation} \label{eq_h_on_gamma}
	h(\xi)=\frac{u^*_y(\xi)}{v^*_y(\xi)}=\lim_{\H\ni z\to \xi}\frac{u(z)}{v(z)}=|a(\xi)|^2\afterline{for any}\xi\in \gamma
\end{equation}
because of \eqref{real_analytic_h} and therefore $|a|^2$ is also real analytic on $\gamma$.

\begin{rem} \label{u_v_Harnack}
	The above is the reason why relation \eqref{eq_u_v_boundary_values} is meaningful. When we write $\frac{\mathcal{U}}{\mathcal{V}}$ on $\Gamma$, it really means the limit of $h\comp \phi^{-1}$ as we approach~$\Gamma$ from the inside of $\Omega$. This limit always exist on a Jordan arc $\Gamma$ when $\Omega$ is simply connected thanks to Harnack's inequality.
	
	It is worth mentioning the work of Jerison and Kenig who showed \cite[Theorems 5.1 and 7.9]{JerKen1982} that equation \eqref{eq_u_v_boundary_values} makes sense whenever $\Omega$ is assumed to be a non-tangentially accessible (NTA) domain.
\end{rem}

\smallskip

Next, consider $h\2{\gamma}$. Its power series around $0\in \gamma$ is given by
\[h\2{\gamma}(x)=\sum_{n=0}^\infty b_nx^n\]
for some real numbers $b_0,b_1,\dots$ This readily extends to a complex analytic function, say $r$, on some open neighbourhood, $D(0,\epsilon')$:
\[r(z)=\sum_{n=0}^\infty b_nz^n,\]
where we can choose $\phi$ and $\epsilon'$ so that $\gamma\subset D(0,\epsilon')$. Of course, by construction and from \eqref{eq_h_on_gamma} we get $r\2{\gamma}=h\2{\gamma}=|a|^2$.

At this point, we want to ``shift'' everything back at $\Omega$. We set 
\[V\equiv \phi(\H\cap D(0,\epsilon'))\subset \Omega\]
and observe that $\bd V$ is a closed Jordan arc such that $\Gamma\varsubsetneq\bd V\cap D(\zeta_0,\rho)$. Define a new function
\begin{equation} \label{eq_the_function_R}
	R\equiv r\comp (\phi^{-1})\2{V\cup \Gamma},
\end{equation}
which is holomorphic on $V$, continuous on $V\cup \Gamma$, and on $\Gamma$ it satisfies $R(\zeta)=|A(\zeta)|^2$.

\smallskip

Now, consider the function $\Phi(z,w)=A(z)\0{A(\0{w})}$. $\Phi$ is holomorphic on $D(\zeta_0,\epsilon)\xx D(\0{\zeta}_0,\epsilon)$ and it satisfies $\Phi(\zeta,\0{\zeta})=A(\zeta)\0{A(\zeta)}=|A(\zeta)|^2$ when $z=\0 w=\zeta\in \Gamma$. As a corollary to Theorem~\ref{the_function_Phi}, the next theorem follows.

\begin{theorem} \label{U_V_side_theorem}
	Let $\Omega$ be a bounded simply connected domain in ${\C}$ and let $\Gamma$ be an open Jordan arc of its boundary with $\zeta_0\in \Gamma$. Suppose there are two positive non-proportional harmonic functions $\mathcal{U}$ and $\mathcal{V}$ on $\Omega$ continuous on $\Omega\cup \Gamma$ and such that
	\[\mathcal{U}(\zeta)=\mathcal{V}(\zeta)=0\inline{and}\frac{\mathcal{U}(\zeta)}{\mathcal{V}(\zeta)}=|A(\zeta)|^2\afterline{for all}\zeta\in \Gamma,\]
	where $A$ is a non-trivial analytic function on a neighbourhood of $\Omega$.
	
	Then, there exists some neighbourhood $D$ of $\zeta_0$ and a closed set $E\subset \Gamma$ of zero harmonic measure so that $(\Gamma\setminus E)\cap D$ is a countable union of regular real analytic simple arcs except possibly for some cusps. The cusps (if they exist) point into $\Omega$ and may only accumulate on $E\cap D$.
\end{theorem}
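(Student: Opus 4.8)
The plan is to derive Theorem~\ref{U_V_side_theorem} as a direct corollary of Theorem~\ref{the_function_Phi}, since essentially all the analytic work has already been carried out in the discussion preceding the statement. The key observation is that the passage to the upper half-plane via the conformal map $\phi:\H\to\Omega$ (which extends continuously to the Jordan arc by Carath\'eodory's theorem), the Schwarz reflection of $u$ and $v$ across $\gamma$, and Harnack's inequality \eqref{eq_Harnack} together produce a function $h$ that is \emph{real analytic} on a neighbourhood of $\gamma$ with $h|_\gamma=|a|^2$. Complexifying its power series around $0$ gives the holomorphic function $r$ on $D(0,\epsilon')$, and transporting back through $\phi^{-1}$ yields the function $R$ of \eqref{eq_the_function_R}: holomorphic on $V\subset\Omega$, continuous on $V\cup\Gamma$, and satisfying $R(\zeta)=|A(\zeta)|^2$ on $\Gamma$.

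First I would restrict attention to the subdomain $V=\phi(\H\cap D(0,\epsilon'))$ and observe that $\Gamma$ (or the relevant sub-arc near $\zeta_0$) is contained in $\bd V$; shrinking if necessary, take $D=D(\zeta_0,\delta)$ small enough that $\Gamma\cap D\subset \bd V$ and $A$ is holomorphic on a neighbourhood of $\clos V$. Next I would introduce the holomorphic function of two variables $\Phi(z,w)=A(z)\,\0{A(\0 w)}$, which is genuinely holomorphic on $D(\zeta_0,\epsilon)\times D(\0\zeta_0,\epsilon)$ (the anti-holomorphic conjugation is undone by the two conjugations), and note $\Phi(\zeta,\0\zeta)=A(\zeta)\0{A(\zeta)}=|A(\zeta)|^2$ for $\zeta\in\Gamma$. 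It is non-trivial because $A$ is non-constant (forced by $\cU\not\equiv c\cV$). Then the triple $(V,\Gamma\cap D,R)$ with this $\Phi$ satisfies precisely hypotheses (i)--(iii) of Theorem~\ref{the_function_Phi}, and applying that theorem produces a closed set $E\subset\Gamma\cap D$ of zero harmonic measure such that $(\Gamma\cap D)\setminus E$ is a countable union of regular real analytic simple arcs with cusps pointing into $V\subset\Omega$, accumulating only on $E$.

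The main subtlety to be careful about is not a deep obstacle but a bookkeeping point: Theorem~\ref{the_function_Phi} is stated for a simply connected domain with Jordan boundary arc, so one must confirm that $V$ is simply connected (it is, being a conformal image of a half-disk) and that $\bd V\cap D(\zeta_0,\rho)$ contains $\Gamma$ as an open Jordan sub-arc, which holds by construction since $\Gamma\subsetneq \bd V\cap D(\zeta_0,\rho)$. One should also record that the "cusp points into $\Omega$" conclusion is inherited correctly: cusps point into $V$, and $V\subset\Omega$, so they point into $\Omega$ as well. Finally, I would remark (as the preceding discussion already establishes) that the hypothesis \eqref{eq_u_v_boundary_values} is automatically meaningful on a Jordan arc by Harnack's inequality, so no extra regularity assumption on $\Gamma$ beyond "Jordan" is needed. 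With these verifications in place the theorem follows immediately; there is no hard analytic step remaining, as the Phragm\'en--Lindel\"of, Weierstra\ss\ preparation, discriminant, and Sakai-theorem machinery has all been absorbed into Theorem~\ref{the_function_Phi}.
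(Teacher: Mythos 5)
Your proposal is correct and follows essentially the same route as the paper: the authors likewise construct $R$ via the conformal map $\phi$, reflection of $u,v$, Harnack's inequality and complexification of $h|_\gamma$, then set $\Phi(z,w)=A(z)\overline{A(\bar w)}$ and invoke Theorem~\ref{the_function_Phi} to conclude. Your additional bookkeeping remarks (simple connectivity of $V$, inheritance of the cusp direction) only make explicit what the paper leaves implicit.
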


Of course, Theorems~\ref{the_function_Phi} and~\ref{U_V_side_theorem} are somewhat far from Sakai's result. Nevertheless, because of the special form of the function $\Phi(z,w)=A(z)\0{A(\0 w)}$, we can actually say more in this case.

\begin{prop} \label{U_V_auxiliary_proposition}
	Let $\Omega$ be a bounded simply connected domain in ${\C}$ and let~$\Gamma$ be an open Jordan arc of its boundary with $\zeta_0\in \Gamma$. Suppose there are two positive non-proportional harmonic functions $\mathcal{U}$ and $\mathcal{V}$ on $\Omega$ continuous on $\Omega\cup \Gamma$ and such that
	\[\mathcal{U}(\zeta)=\mathcal{V}(\zeta)=0\inline{and}\frac{\mathcal{U}(\zeta)}{\mathcal{V}(\zeta)}=|A(\zeta)|^2\afterline{for all}\zeta\in \Gamma\]
	where $A$ is a non-trivial analytic function on a neighbourhood of $\Gamma$.
	
	Then, there exists a neighbourhood $D$ of $\zeta_0$ and a function $R$ satisfying the following:
	\begin{enumerate}[{\rm(i)}]
		\item $R$ is holomorphic on $\Omega\cap D,$
		\item $R$ is continuous on $(\Omega\cup \Gamma)\cap D$ and
		\item $R(\zeta)=|A(\zeta)|^2$ for $\zeta\in \Gamma\cap D$.
	\end{enumerate}
	Additionally, for any $\zeta_0\in \Gamma$ with $A'(\zeta_0)\neq0$ either
	\begin{enumerate}[{\rm(1)}]
		\item there exist a function $\Psi_1$ holomorphic and univalent on $\Omega\cap D$ such that $\Psi_1$ is continuous on $(\Omega\cup \Gamma)\cap D,$ and $\Psi_1(\zeta)=|A(\zeta)-A(\zeta_0)|^2$ for $\zeta\in \Gamma\cap D,$~or \label{index_1_case_for_R}
		\item there exist a function $\Psi_2$ holomorphic and univalent on $\Omega\cap D$ such that $\Psi_2^2$ is continuous on $(\Omega\cup \Gamma)\cap D,$ and $\Psi_2^2(\zeta)=|A(\zeta)-A(\zeta_0)|^2$ for $\zeta\in \Gamma\cap D$. \label{index_2_case_for_R}
	\end{enumerate}
\end{prop}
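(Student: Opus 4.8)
The plan: for (i)--(iii) there is essentially no new work, since the required $R$ is precisely the function built in the discussion preceding the statement. Recall that, starting from a conformal map $\phi\colon\H\to\Omega$ with $\phi(0)=\zeta_0$ and $\phi(\gamma)=\Gamma$, one showed $h\2{\gamma}=|a|^2$ (with $a=A\comp\phi$) is real analytic on $\gamma$, extended its Taylor series at $0$ to a holomorphic function $r$ on a disk $D(0,\epsilon')\supset\gamma$, and set $R=r\comp(\phi^{-1})\2{V\cup\Gamma}$ with $V=\phi(\H\cap D(0,\epsilon'))$, as in~\eqref{eq_the_function_R}. That construction uses only analyticity of $A$ near $\Gamma$, which is the present hypothesis, so I would simply take $D$ to be a disk about $\zeta_0$ with $\Omega\cap D\subset V$; this gives (i)--(iii).

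For the dichotomy I would reduce to Sakai's Proposition~\ref{SakProposition} by a conformal change of variable. Fix $\zeta_0\in\Gamma$ with $A'(\zeta_0)\neq0$ and set $B(z)=A(z)-A(\zeta_0)$; since $B(\zeta_0)=0$ and $B'(\zeta_0)\neq0$, $B$ is biholomorphic from a neighbourhood of $\zeta_0$ onto a neighbourhood of $0$. The key observation is that dividing (iii) by $A$ gives $\0{A(\zeta)}=R(\zeta)/A(\zeta)$ on $\Gamma$, hence $\0{B(\zeta)}=R(\zeta)/A(\zeta)-\0{A(\zeta_0)}$ on $\Gamma$. Transporting this under the substitution $w=B(z)$, the right-hand side becomes a function $\widetilde S(w)=(R/A)\comp B^{-1}(w)-\0{A(\zeta_0)}$ that is holomorphic on $\widetilde\Omega\equiv B(\Omega\cap D)$ near $0$, continuous up to $\widetilde\Gamma\equiv B(\Gamma\cap D)$ near $0$, and satisfies $\widetilde S(w)=\0 w$ there; that is, $\widetilde S$ is a Schwarz function of $\widetilde\Omega\cup\widetilde\Gamma$ at $0$ (and $0$ is non-isolated in $\widetilde\Gamma$ because $\zeta_0$ is non-isolated in $\Gamma$).

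Next I would invoke Sakai's Proposition~\ref{SakProposition} for $\widetilde\Omega$, $\widetilde\Gamma$, $\widetilde S$: the existence of a Schwarz function forces, according to whichever of its two cases applies, the existence for some $\delta>0$ of either a holomorphic univalent $\Phi_1(w)=w\widetilde S(w)$ on $\widetilde\Omega\cap D(0,\delta)$, continuous up to $\widetilde\Gamma\cap D(0,\delta)$, with $\Phi_1=|w|^2$ on $\widetilde\Gamma\cap D(0,\delta)$, or a holomorphic univalent $\Phi_2(w)=\sqrt{w\widetilde S(w)}$ on $\widetilde\Omega\cap D(0,\delta)$ with $\Phi_2^2$ continuous up to $\widetilde\Gamma\cap D(0,\delta)$ and $\Phi_2^2=|w|^2$ there. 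After shrinking $D$ so that $B(\Omega\cap D)\subset D(0,\delta)$, I would set $\Psi_j=\Phi_j\comp B$ on $\Omega\cap D$: each $\Psi_j$ is holomorphic and univalent on $\Omega\cap D$ (a composition of a univalent map with a biholomorphism), $\Psi_1$ resp.\ $\Psi_2^2$ is continuous on $(\Omega\cup\Gamma)\cap D$, and for $\zeta\in\Gamma\cap D$ one gets $\Psi_1(\zeta)=|B(\zeta)|^2=|A(\zeta)-A(\zeta_0)|^2$ resp.\ $\Psi_2^2(\zeta)=|A(\zeta)-A(\zeta_0)|^2$ --- which is case (1) resp.\ (2) of the statement.

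The one place requiring genuine care --- and the main obstacle --- is continuity of $g:=R/A$ up to $\Gamma$ at the distinguished point $\zeta_0$. After shrinking $D$ so that $A$ has no zero on $\clos{\Omega\cap D}$ other than possibly $\zeta_0$, continuity on $\Gamma\cap D\setminus\{\zeta_0\}$ is immediate, and if $A(\zeta_0)\neq0$ there is nothing further to check. If $A(\zeta_0)=0$, then $A$ has a simple zero at $\zeta_0$ (because $A'(\zeta_0)\neq0$) and $R(\zeta_0)=|A(\zeta_0)|^2=0$; writing $A(z)=(z-\zeta_0)A_1(z)$ with $A_1$ holomorphic and non-vanishing near $\zeta_0$ gives $g=\big(R(z)/(z-\zeta_0)\big)/A_1(z)$, where $|R(z)/(z-\zeta_0)|\leq C|z-\zeta_0|^{-1}$ near $\zeta_0$ in $\Omega$ while on $\Gamma\cap D\setminus\{\zeta_0\}$ it equals $|A_1(z)|^2\,|z-\zeta_0|\to0$. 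Then the Phragm{\'e}n--Lindel{\"o}f principle (Theorem~\ref{Fuchs}), applied exactly as in the proof of Proposition~\ref{monomial_schwarz}, forces $R(z)/(z-\zeta_0)\to0$ as $\Omega\ni z\to\zeta_0$, so $g$ is continuous up to $\zeta_0$ with $g(\zeta_0)=0=\0{A(\zeta_0)}$. The remaining work is only bookkeeping --- nesting the neighbourhoods so that $\Omega\cap D\subset V$, $\clos{\Omega\cap D}$ lies in the domain of biholomorphy of $B$, and $B(\Omega\cap D)\subset D(0,\delta)$ --- so the real content of the statement is the single observation that the substitution $w=A(z)-A(\zeta_0)$ converts the ``$|A|^2$'' boundary condition into a genuine Schwarz condition, to which Sakai's Proposition~\ref{SakProposition} applies verbatim.
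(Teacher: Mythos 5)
Your proof is correct and follows essentially the same route as the paper: both define $R$ via \eqref{eq_the_function_R}, convert the boundary identity $R=|A|^2$ into a Schwarz function through the change of variable $w=A(z)$ (the paper's $S(w)=\frac{1}{w}R\circ A^{-1}(w)$, translated by $\zeta_0'=A(\zeta_0)$, is exactly your $\widetilde S$), and then invoke Sakai's Proposition~\ref{SakProposition} and pull back by $A$ to obtain $\Psi_1,\Psi_2$. The only divergence is your Phragm\'en--Lindel\"of treatment of the case $A(\zeta_0)=0$, which the paper short-circuits by noting that \eqref{eq_Harnack} forces $|A|^2=\mathcal{U}/\mathcal{V}>0$ on $\Gamma$, so $A$ never vanishes there and that case is vacuous.
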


\begin{proof}
	We have already established the existence of such a function $R$ in \eqref{eq_the_function_R}.
	
	For the rest, $A'(\zeta_0)\neq0$ and we may assume without loss of generality that $A$ is conformal on a neighbourhood of $\clos{\Omega}$. Recall that $V$ from the definition of $R$ in \eqref{eq_the_function_R} is such that $\bd V$ is Jordan and $\Gamma\varsubsetneq\bd V\cap D(\zeta_0,\rho)$ when $\Omega\subset D(\zeta_0,\rho)$. Since $A$ is continuous and injective on $\clos V$, there exists some small $\delta$, ${0<\delta\leq\rho}$, such that $\bd(A(V))\cap D(\zeta_0',\delta)\subset A(\Gamma)$.
	
	Now, let $A(\zeta_0)=\zeta_0'$, $\Omega'=A(V)\cap D(\zeta_0',\delta)$, and $\Gamma'=\bd \Omega'\cap D(\zeta_0',\delta)$. The function
	\begin{equation} \label{eq_Schwarz_of_U_V}
		S(z)\equiv\frac{1}{z}R\comp A^{-1}(z)
	\end{equation}
	is a Schwarz function of $\Omega'\cup \Gamma'$ in $D(\zeta_0',\delta)$:
	\begin{enumerate}[(i)]
		\item $S$ is holomorphic on $\Omega'$,
		\item it is continuous on $\Omega'\cup \Gamma'$, and
		\item $S(\zeta)=\frac{1}{\zeta}R(A^{-1}(\zeta))=\0{\zeta}$ on $\Gamma'$.
	\end{enumerate}
	Notice that from \eqref{eq_Harnack} the functions $a=A\comp \phi$ and $A$ are always non-zero and thus $S$ is a well-defined holomorphic function, because $0$ cannot be a point of~$\clos{\Omega}'$.
	
	Finally, consider the function $S_t(z)=S(z+\zeta'_0)-\0{\zeta'_0}$, which is a Schwarz function on $(\Omega'-\zeta'_0)\cup (\Gamma'-\zeta'_0)$ at $0$. From Theorem~\ref{SakProposition}, we know that one of the functions $\Phi_1(z)=zS_t(z)$ and $\Phi_2(z)=\sqrt{zS_t(z)}$ is univalent on $(\Omega'-\zeta'_0)\cap D(0,\delta')$ for some $\delta'\leq \delta$. Changing variables to get back to our initial domain $\Omega$, we find that one of the following functions, $\Psi_1$ or $\Psi_2$, has to be univalent on $\Omega\cap D'$:
	\begin{align*}
		\Psi_1(z)&=(A(z)-A(\zeta_0))\left(\frac{R(z)}{A(z)}-\0{A(\zeta_0)}\right)\\
		\intertext{and}
		\Psi_2(z)&=\sqrt{(A(z)-A(\zeta_0))\left(\frac{R(z)}{A(z)}-\0{A(\zeta_0)}\right)}
	\end{align*}
	for $z\in \Omega\cap D'$, where $D'=A^{-1}(D(\zeta'_0,\delta'))$. The rest of the desired properties are obvious.
\end{proof}

In the above proof, $\Gamma'$ is the image of a Jordan arc under the (conformal) map~$A$. Therefore, the existence of a Schwarz function, $S$, along with Theorem~\ref{SakTheorem} imply that $\Gamma'$, and in turn $\Gamma$, satisfy (1) or (2c) of Theorem~\ref{SakTheorem}. Case~(1) corresponds to~\eqref{index_1_case_for_R} of Proposition~\ref{U_V_auxiliary_proposition} and~(2c) to \eqref{index_2_case_for_R}, that is, $\Gamma'$ (respectively,~$\Gamma$) has a cusp if, and only if, the function 
\[\sqrt{z\big(S(z+\zeta'_0)-\0{\zeta'_0}\big)}\]
is univalent on $(\Omega'-\zeta'_0)\cap D(0,\delta')$ (respectively, $\Psi_2$ on $\Omega\cap D$).

As a consequence, we have the following theorem, which is the main result of this section.
\begin{theorem} \label{U_V_main_theorem}
	Let $\Omega$ be a bounded simply connected domain in ${\C}$ and let $\Gamma$ be an open Jordan arc of its boundary with $\zeta_0\in \Gamma$. Suppose there are two positive non-proportional harmonic functions $\mathcal{U}$ and $\mathcal{V}$ on $\Omega$ continuous on $\Omega\cup \Gamma$ and satisfying
	\[\mathcal{U}(\zeta)=\mathcal{V}(\zeta)=0\inline{and}\frac{\mathcal{U}(\zeta)}{\mathcal{V}(\zeta)}=|A(\zeta)|^2\afterline{for all}\zeta\in \Gamma,\]
	where $A$ is a non-trivial analytic function on a neighbourhood of $\Gamma$.
	
	Then, for all but possibly finitely many points $\zeta_0\in \Gamma$ there exists some small neighbourhood $D$ of $\zeta_0$ such that the following holds:
	\begin{equation} \label{eq_Gamma_is_analytic}
		\Gamma\cap D\text{ is a regular real analytic simple arc through }\zeta_0\text{ except possibly a cusp at }\zeta_0.
	\end{equation}
	The finitely many points around which \eqref{eq_Gamma_is_analytic} might fail are the points $\zeta\in \Gamma$ where $A'(\zeta)=0,$ i.e., where $A$ might not be invertible.
	
	There is a cusp at $\zeta_0$ if and only if \eqref{index_2_case_for_R} of Proposition~{\ref{U_V_auxiliary_proposition}} holds true.
\end{theorem}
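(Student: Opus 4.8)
The plan is to read Theorem~\ref{U_V_main_theorem} off of Proposition~\ref{U_V_auxiliary_proposition} together with Sakai's Theorem~\ref{SakTheorem} and Proposition~\ref{SakProposition}: essentially all of the analytic work has already gone into the construction of the functions $R$ and $S$ above, so what remains is a case analysis together with a conformal change of variables.

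First I would pin down the exceptional set. Since $|A|$ is non-constant on $\Gamma$, the function $A$ is non-constant, so $A'$ is a non-trivial analytic function on the connected neighbourhood of $\Gamma$ on which $A$ is defined; hence its zeros are isolated, and (the closure of $\Gamma$ being contained in the domain of $A$, cf.\ the normalisations of \S\ref{sec_u_v}) there are only finitely many $\zeta\in\Gamma$ with $A'(\zeta)=0$. Fix any $\zeta_0\in\Gamma$ with $A'(\zeta_0)\neq0$. By Proposition~\ref{U_V_auxiliary_proposition} and the construction in its proof there is a neighbourhood on which $A$ is conformal, a function $R$ as in \eqref{eq_the_function_R}, and, writing $\zeta_0'=A(\zeta_0)$, $\Omega'=A(V)\cap D(\zeta_0',\delta)$ and $\Gamma'=\bd\Omega'\cap D(\zeta_0',\delta)$, the function $S$ of \eqref{eq_Schwarz_of_U_V} is a genuine Schwarz function of $\Omega'\cup\Gamma'$ on $D(\zeta_0',\delta)$.

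Next I would apply Theorem~\ref{SakTheorem} to the translated Schwarz function $S_t(z)=S(z+\zeta_0')-\0{\zeta_0'}$ on $(\Omega'-\zeta_0')\cup(\Gamma'-\zeta_0')$ at $0$. Because $\Gamma$ is an open Jordan arc and $A$ is conformal near $\zeta_0$, the piece $\Gamma'$ is again a simple arc bounding $\Omega'$, which rules out alternatives (2a) and (2b) of Theorem~\ref{SakTheorem}: (2a) would force $\Omega'\cap D=D\setminus\Gamma'$, that is, $\Gamma'$ approached from both sides, while (2b) would make $\Gamma'$ a union of two arcs meeting at $\zeta_0'$, neither of which is compatible with $\Gamma'$ being a Jordan arc. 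Hence only (1) or (2c) can hold, and in either case $\Gamma'\cap D$ is a regular real analytic simple arc through $\zeta_0'$ with at most a cusp at $\zeta_0'$ pointing into $\Omega'$. Since $A^{-1}$ is conformal near $\zeta_0'$, it carries regular real analytic arcs to regular real analytic arcs and an inward-pointing cusp to an inward-pointing cusp; pulling back and setting $D=A^{-1}(D(\zeta_0',\delta))$ gives exactly \eqref{eq_Gamma_is_analytic}.

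For the cusp dichotomy I would invoke Proposition~\ref{SakProposition}: alternative (1) of Theorem~\ref{SakTheorem} (no cusp) occurs precisely when $\Phi_1(z)=zS_t(z)$ is univalent near $0$, and (2c) (a cusp) precisely when $\Phi_2(z)=\sqrt{zS_t(z)}$ is univalent near $0$. Transporting $\Phi_1$ and $\Phi_2$ through the change of variable $z\mapsto A(z)$ exactly as in the proof of Proposition~\ref{U_V_auxiliary_proposition} identifies them with $\Psi_1$ and $\Psi_2$, so $\Gamma$ has a cusp at $\zeta_0$ if and only if alternative~\eqref{index_2_case_for_R} of Proposition~\ref{U_V_auxiliary_proposition} holds, as claimed. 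The only points that really need care are the exclusion of Sakai's cases (2a) and (2b) via the Jordan-arc hypothesis and the invariance of real-analyticity and cusp orientation under $A^{-1}$; everything else is bookkeeping on top of the earlier results.
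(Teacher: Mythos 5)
Your proposal is correct and takes essentially the same route as the paper: the authors likewise deduce the theorem from Proposition~\ref{U_V_auxiliary_proposition} by noting that $\Gamma'$ is the conformal image of a Jordan arc, so Theorem~\ref{SakTheorem} leaves only alternatives (1) and (2c), which correspond to $\Psi_1$ resp.\ $\Psi_2$ being univalent via Proposition~\ref{SakProposition}. Your write-up merely makes explicit the exclusion of cases (2a)--(2b) and the pullback under $A^{-1}$, which the paper leaves implicit.
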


Of course, one can ask at this point whether it is possible to actually have a cusp. The answer is yes as the next example shows.

\begin{ex}
	Let $\Omega$ be open and $\Gamma=\bd \Omega\cap D(0,\rho)$ (with $\rho\leq1$ sufficiently small) be such that $\Gamma$ has a cusp at $0$ (i.e., $\zeta_0=0$). Then, from Remarks~\ref{Sak_cusp_case}, for some $\eta>0$, there is a holomorphic function $T$ defined on $\{|z|\leq \eta\}$ that maps conformally the closed upper half-disk $K_\eta=\{|z|\leq \eta\such\Im(z)\geq0\}$ into $\Omega\cup \Gamma$ and $\Gamma\cap D\subset T(-\eta,\eta)$ for some small neighbourhood $D$ of $0$. Also, ${T(0)=0}$ with order $2$. By dilating appropriately, we may assume that everything happens in the unit disk, that is, $\eta=1$, $T$ is defined on $\clos\D$ and is univalent on $K_1=\{|z|\leq1\such\Im(z)\geq0\}$, $T(K_1)\subset \Omega\cup \Gamma$, and $\Gamma\cap D(0,\rho)\subset T(-1,1)$.
	
	Next, consider two positive harmonic functions, $u$ and $v$, on the upper half-disk $\D\cup\H$ that are zero on $(-1,1)$. As we saw in the beginning of this section, $u$ and $v$ can be extended on the whole disk 
	and the ratio $u/v$ is a positive analytic function on $(-1,1)$. Therefore, on $(-1,1)$ we can write $u/v=|a|^2$ for some function $a$ holomorphic on $\D$.
	
	Finally, construct the functions
	\[\cU=u\comp T^{-1},\quad\cV=v\comp T^{-1},\inline{and}A=a\comp T^{-1}.\]
	Then, $A$ is holomorphic around the cusp at $0$, and $\cU,\cV$ are positive harmonic functions on $\Omega\cap D(0,\rho)$ and zero on the boundary $\Gamma$. Moreover, $\cU$ and $\cV$ satisfy $\cU/\cV=|A|^2$ on $\Gamma$.
\end{ex}

\section{Some open ``free boundary'' problems in the spirit of Sakai} \label{free} 

All problems treated above are examples of the so-called free boundary problems (non-variational free boundary problems).

We would like to call the attention of the reader to one open question: what can one say for the boundary of a domain $\Omega$ that is not simply connected but admits positive harmonic functions vanishing on its boundary and whose ratio is ``nice'' on that boundary? Finitely connected situations present no difficulties, but what if, for example, $\Gamma$ is a Cantor set and $\Omega=\D\setminus \Gamma$? Suppose we know that the ratio of two positive harmonic (non-proportional) functions $\cU,\cV$ in~$\Omega$ vanishing on the Cantor set $\Gamma$ has a well-defined ratio on $\Gamma$ (this happens for a wide class of Cantor sets $\Gamma$'s, for example for all regular Cantor sets of positive Hausdorff dimension). Suppose this ratio is equal to $|A(\zeta)|^2\neq \const$ for $\zeta\in \Gamma$, where $A$ is a holomorphic function on $\D$. What we can say about the Cantor set~$\Gamma$? The ``desired'' answer is that this is impossible to happen on any Cantor set. 

This type of problems (we may call them ``one-phase free boundary problems'') appear naturally in certain problems of complex dynamics, see, e.g.,~\cite{Vol1993}. If we would know the aforementioned answer (we conjecture that no Cantor set would allow such a triple $(\cU, \cV, A)$), then a long-standing problem about the dimension of harmonic measure on Cantor repellers would be solved.

\smallskip

Another similar one-phase boundary problem concerns functions in ${\R}^n$ for $n>2$. Let $\Omega$ be a bounded domain in ${\R}^n$, $n>2$, and let $\Gamma=\bd \Omega\cap D(x,r)$, where $x\in\bd \Omega$. Again, let $\cU,\cV$ be two positive (non-proportional) harmonic functions in $\Omega$ vanishing continuously on $\Gamma$. If $\Omega$ is assumed to be a Lipschitz domain, then \cite{JerKen1982} claims that $\cU/\cV$ makes sense on $\Gamma$ and is additionally a H{\"o}lder function on $\Gamma$ (boundary Harnack principle). 

Here is a question. Let $R$ be a real analytic function on $D(x,r)$, $x\in \Gamma$, and let $\cU/\cV=R$ on $\Gamma\cap D(x, r)$. Is it true that $\Gamma\cap D(x, r)$ is real analytic, maybe with the exception of some lower dimensional singular set?

\section*{Acknowledgment}

We are grateful to the referees for the valuable remarks that improved our exposition.

\printbibliography

\end{document}